\newcommand{\delete}[1]{}
\newcommand{\mcite}[1]{\cite{#1}}  
\newcommand{\mbibitem}[1]{\bibitem{#1}} 
\newcommand{\mcite}[1]{\cite{#1}{\small{\tt{{\ }(#1)}}}}  
\newcommand{\mbibitem}[1]{\bibitem[\bf #1]{#1}} 
\newtheorem{thm}{Theorem}[section]
\newtheorem{prop}[thm]{Proposition}
\newtheorem{lem}[thm]{Lemma}
\theoremstyle{definition}
\newtheorem{defn}[thm]{Definition}
\newtheorem{rem}[thm]{Remark}
\newtheorem{exam}[thm]{Example}
\newtheorem{prop-def}{Proposition-Definition}[section]
\newcommand{\rr}{\big(R,\cdot,(P_{\omega})_{\omega\in S},\theta\big)}
\newcommand{\ee}{\big(E,\cdot_{E},(P_{\omega,\,E})_{\omega\in S},\theta_{E}\big)}
\newcommand{\eep}{\big(E,\cdot_{E}^{\prime},(P_{\omega,\,E}^{\prime})_{\omega\in S},\theta_{E}^{\prime}\big)}
\newcommand{\crr}{R}
\newcommand{\cee}{E}
\newcommand{\rnv}{R\natural V}
\newcommand{\rnvp}{R\natural^{\prime} V}
\newcommand{\cvv}{V}
\newcommand{\cxx}{V}
\newcommand{\nc}{\newcommand}
\nc{\tred}[1]{\textcolor{red}{#1}} \nc{\tgreen}[1]{\textcolor{green}{#1}}
\nc{\tblue}[1]{\textcolor{blue}{#1}} \nc{\tpurple}[1]{\textcolor{purple}{#1}}
\nc{\bfk}{{\bf k}}
\nc{\yuan}[1]{\tred{\underline{Yuan:}#1 }}
\nc{\wen}[1]{\tblue{\underline{Wen:}#1 }}
\begin{document}

\title[Extending Structures for Rota-Baxter family Hom-associative Algebras]{Extending Structures for Rota-Baxter family Hom-associative Algebras}

\author{Junwen Wang}
\address{School of Mathematics and Statistics,  Henan University,  Henan,  Kaifeng 475004,  P.\, R. China}
\email{3111435107@qq.com}


\author{Yuanyuan Zhang$^{*}$}\thanks{*Corresponding author}
\address{School of Mathematics and Statistics,  Henan University,  Henan,  Kaifeng 475004,  P.\, R. China}
\email{zhangyy17@henu.edu.cn}
\author{Yanjun Chu}
\address{School of Mathematics and Statistics,  Henan University,  Henan,  Kaifeng 475004,  P.\, R. China}
\email{chuyj@henu.edu.cn}

\date{\today}

\begin{abstract}%
In this paper, we first define extending datums and unified products of Rota-Baxter family Hom-associative algebras, and theoretically solve the extending structure problem. Moreover, we consider flag datums as an application, and give an example of the extending structure problem. Second, we introduce matched pairs of Rota-Baxter family Hom-associative algebras, and theoretically solve the factorization problem. Finally, we define deformation maps on a Rota-Baxter family Hom extending structure, and theoretically solve the classifying complements problem.
\end{abstract}

\makeatletter
\@namedef{subjclassname@2020}{\textup{2020} Mathematics Subject Classification}
\makeatother
\subjclass[2020]{
16W99  
}

\keywords{Rota-Baxter family Hom-associative algebra, extending structure, unified product, matched pair, complement}

\maketitle

\tableofcontents

\setcounter{section}{0}

\allowdisplaybreaks



\section{Introduction}
\subsection{Rota-Baxter family algebras}
Let $S$ be a semigroup. A Rota-Baxter family algebra of weight $\lambda$~\cite{Guo09} is a pair $(R,(P_\omega)_{\omega\in S })$ consisting of an associative algebra $R$ over some field $\bfk$ together with a collection $(P_\omega)_{\omega\in S }:R\to R$ of linear endomorphisms indexed by a semigroup $ S $ such that the Rota-Baxter family relation
\begin{equation*}
P_{\alpha}(a)P_{\beta}(b)=P_{\alpha\beta}\bigl( P_{\alpha}(a)b  + a P_{\beta}(b) + \lambda ab \bigr)
\end{equation*}
 holds for any $a, b \in R$ and $\alpha,\, \beta \in  S $.
The first family algebra structure appeared in the literature of Ebrahimi-Fard et al. on Lie-theoretic aspects of renormalization~\cite[Proposition~9.1]{EGP} (see also~\cite{DK}). The example in \cite{EGP} is given by the momentum renormalization scheme: here $ S $ is the additive semigroup of non-negative integers, and the operator $P_\omega$ associates to a Feynman diagram integral its Taylor expansion of order $\omega$ at vanishing exterior momenta.
 Other families of algebraic structures appeared recently: dendriform and tridendriform family algebras \mcite{A2020,Foi20,ZG, ZGM}, pre-Lie family algebras \cite{MZ}. In \cite{Das}, the author introduced the notion of a relative Rota-Baxter family algebra and found various relations with dendriform family algebras. In~\cite{Das1}, the author define the cohomology of a given Rota-Baxter family algebra.

\smallskip

%

\subsection{Hom-algebras}

    Hom-type algebras are the corresponding algebraic identities twisted by a linear space homomorphism. Recently, Hom-type algebras have been studied by many authors. The notion of Hom-Lie algebras
was first introduced by Hartwig, Larsson and Silvestrov \cite{HLS}. Hom-Lie algebras appeared
in examples of q-deformations of the Witt and Virasoro algebras. Other type of algebras
(e.g. associative, Leibniz, Poisson, pre-Lie,...) twisted by homomorphisms have also been
studied. See \cite{MS, MS2} (and references therein) for more details.
\smallskip

A Hom-associative algebra is a multiplication on a vector space where the structure is twisted by a linear space homomorphism, which is a generalization of associative algebras ~\cite{MS}.
\begin{defn}
Let $V$ be a \bfk-vector space. A Hom-associative algebra over $V$ is a triple $(V, \mu, \theta)$ where $\mu: V \times V \rightarrow V$ is a bilinear map and $\theta: V \rightarrow V$ is a linear map, satisfying
\[\quad \mu(\theta(x), \mu(y, z))=\mu(\mu(x, y), \theta(z)),\,\text{for}\, x, y,z\in V.\]
\end{defn}
Makhlouf~\cite{M12} studied Rota-Baxter Hom-algebras, Hom-dendriform algebras and Hom-preLie algebras and generalized the canonical relation to the ``Hom" version.
Attan etc ~\cite{AGK} studied Rota-Baxter family Hom-associative algebras, Hom-(tri)dendriform family algebras and Hom-preLie family algebras, and generalized the canonical relation to the ``Hom and family" context. Our main objective in this
paper is the notion of Rota-Baxter family Hom-associative algebra.

%
%
\smallskip
\subsection{Extending structures}
The extending structure problem was first studied in group theory by Agore and Militaru~\cite{AM14}, combining the bicrossed product (matched pair)~\cite{T81} and the crossed product~\cite{AM08}, they obtained a more general product (called unified product),
and then they theoretically solved the extending structure problem. Later, they also studied Lie algebras~\cite{AM14L}, Leibniz algebras~\cite{AM13}, Hopf algbras\cite{AM13B}, poisson algebras~\cite{AM15} and associative algebras~\cite{AM16}. . In recent years, many researchers studied the problem for various algebraic structures. Hong studied extending structures for left-symmetric algebras~\cite{H19b}, conformal algebras~\cite{H19a,H17} and Lie bialgebras~\cite{H21}, Zhang studied for 3-Lie algebras~\cite{Z22b} and braided Lie bialgebras~\cite{Z22a}, Zhao et.al studied for Lie conformal superalgebras~\cite{ZCY}, Peng et.al studied for Rota-Baxter Lie algebras~\cite{PZ}, Hou studied for perm algebras~\cite{HB23}, and so on. The scholars
who care about this problem often define extending structures and unified products
of different algebraic structures. In this paper, we will define extending structures and unified products of Rota-Baxter family Hom-associative algebras as a useful tool to solve the extending structure problem for Rota-Baxter family Hom-associative algebras.

\begin{defn}
If $\crr$ is a subalgebra of Rota-Baxter family Hom-associative algebra $E$, then $E$ is also called an {\bf extension} of $\crr$, and is denoted by $\crr \subset E$. In this case, we also say $\crr\subset E$ is an extension of Rota-Baxter family Hom-associative algebras. A subspace $V$ of $E$ is called a {\bf space complement} of $\crr$ in $E$ if $E=\crr+V$, and $\crr\cap V={0}$.
\end{defn}

Our first aim of this paper is to study the extending structure problem for Rota-Baxter family Hom-associative algebras.

{\bf The Extending Structure (ES) Problem}: Let $\crr$ be a Rota-Baxter family Hom-associative algebra, $E$ a vector space containing $\crr$ as a subspace. Describe and classify all extensions $\ee$ of $\crr$.
The ES problem can be rephrased with the language of ``subalgebra": Let $\crr$ be a Rota-Baxter family Hom-associative algebra, $E$ a vector space containing $\crr$ as a subspace. Describe and classify all Rota-Baxter family Hom-associative algebra structures that can be defined on $E$ containing $\crr$ as a subalgebra.
\smallskip

Our second aim of this paper is to study the factorization problem for Rota-Baxter family Hom-associative algebras, which is the ES problem with an additional assumption ``the space complement $V$ of $\crr$ in $E$ to be also a subalgebra of $E$".

{\bf The Factorization Problem}: Let $\crr$ and $\cvv$ be two Rota-Baxter family Hom-associative algebras. Describe and classify all Rota-Baxter family Hom-associative algebra structures that can be defined on $E$ such that $E$ {\bf factorizes through} $\crr$ and $V$, i.e., $E$ contains $\crr$ and $V$ as subalgebras such that $E=R+V$ and $R\cap V=\{0\}$.


\begin{defn}
Let $\crr \subset \cee$ be an extension of Rota-Baxter family Hom-associative algebras. A subalgebra $B$ of $\cee$ is called a {\bf Rota-Baxter family Hom complement} of $\crr$ in $\cee$ if $E=R+B$ and $R\cap B=\{0\}$. We know that $V$ is a Rota-Baxter family Hom complement of $\crr$ in $E$ if and only if $E$ factorizes through $\crr$ and $V$.
\end{defn}

Our third aim of this paper is to study the classifying complements problem for Rota-Baxter family Hom-associative algebras.
Let $\crr,V,E$ be three Rota-Baxter family Hom-associative algebras such that $E=\crr+V$ and $R\cap V=\{0\}$. The factorization problem is to find all $E$ by fixing $\crr$ and $V$, while the classifying complements problem is to find all $V$ by fixing $\crr$ and $E$. Generally speaking, the classifying complements problem is an inverse problem of the factorization problem.

{\bf The Classifying Complements Problem(CCP)}: Let $\crr \subset E$ be an extension of Rota-Baxter family Hom-associative algebras.
%
Describe and classify all Rota-Baxter family Hom complements of $\crr$ in $E$, and compute the cardinal of the isomorphism classes of all Rota-Baxter family Hom complements of $\crr$ in $E$, which is called the {\bf index} of $\crr$ in $E$ and will be denoted by $[E:\crr]$.

The CCP problem was first studied in group theory~\cite{AM15GC}, and an additional condition ``if a group complement exists" is needed. The authors define deformation maps on a group complement (i.e. a matched pair) and theoretically solve the problem. They also studied the CCP problems of associative algebras, Lie algebras and Hopf algebras by defining deformation maps on the given algebraic complement (i.e. a matched pair)~\cite{A14,AM13HC,AM14LC}. In this way, Hong~\cite{H19b} and Hou~\cite{HB23} studied the CCP Problems of left-symmetric algebras and perm algebras respectively. However, we define deformation maps on a dendriform extending structure (not a matched pair) and solve the CCP problem more practically in~\cite{ZW24}.
 In this paper, we define deformation maps on a Rota-Baxter family Hom extending structure (more general case), still not necessary a matched pair.

\smallskip
\subsection{Outline of the paper}
The paper is organized as follows. In Section 2, we recall some basic concepts of Rota-Baxter family Hom-associative algebras. In Section 3, we first define extending datums and unified products of Rota-Baxter family Hom-associative algebras. Second, we establish a bijective map between extensions and Rota-Baxter family Hom extending structures, which induces a bijection between the equivalent (cohomologous) classes by defining an equivalent (cohomologous) relation. This gives a theoretically answer to the extending structure problem. Third, we consider the flag datums as a special case of Rota-Baxter family Hom extending structures, and give an example of the extending structure problem. Finally, we introduce matched pairs and bicrossed products of Rota-Baxter family Hom-associative algebras and theoretically solve the factorization problem. In Section 4, we define the deformation map on a Rota-Baxter family Hom extending structure and theoretically solve the classifying complements problem.

\smallskip
{\bf Notation.}
Throughout this paper, let $\bfk$ be a field unless the contrary is specified, which will be
the base ring of all modules, algebras, as well as linear
maps. By an associative (or Hom-associative) algebra we mean a nonunitary (not necessary unital) associative algebra. We also fix $S$ a semigroup whose elements will be denoted by $\omega,\alpha,\beta,\cdots$. A linear (or bilinear) map is called {\bf trivial} if it is 0. We always assume that $\lambda \in \bf k$.

\section{Rota-Baxter family Hom-associative algebras and extension relations}

\subsection{Rota-Baxter family Hom-associative algebras and modules}
In this subsection, we mainly recall some basic concepts of Rota-Baxter family Hom-associative algebras and give some examples.

%
%
%

\begin{defn}~\cite{AGK}
Let $R$ be a vector space, $\cdot:R\times R\rightarrow R$ a bilinear map, $(P_{\omega})_{\omega\in S}:R\rightarrow R$ a family of linear maps and $\theta:R\rightarrow R$ a linear map.
A {\bf Rota-Baxter family Hom-associative algebra} of weight $\lambda$, or simply {\bf Rota-Baxter family Hom-associative algebra} is a 4-tuple $\rr$ if the following identities hold for all $x,y,z\in R$ and $\omega,\alpha,\beta\in S$:
\begin{align*}
\theta(x)\cdot(y\cdot z)=&(x\cdot y)\cdot\theta(z);\\
P_{\alpha}(x)\cdot P_{\beta}(y)=&P_{\alpha\beta}\big(P_{\alpha}(x)\cdot y+x\cdot P_{\beta}(y)+\lambda\,x\cdot y\big);\\
P_{\omega}\big(\theta(x)\big)=&\theta\big( P_{\omega}(x)\big).
\end{align*}
\end{defn}
  The 4-tuple $\rr$ will be denoted simply by $R$ and the multiplication $\cdot$ will be simply denoted by concatenation
   if there is no confusion. 

\smallskip

\begin{rem}~\cite{AGK}
Let $\big(R,\cdot,(P_{\omega})_{\omega\in S}\big)$ be a Rota-Baxter family algebra  . Suppose that $\theta:R\rightarrow R$ is a linear map, satisfying $\theta(x y)=\theta(x)\,\theta(y)$, $P_{\omega}\big(\theta(x)\big)=\theta\big(P_{\omega}(x)\big)$, for all $x,y\in R,\omega\in S$. Then the 4-tuple $\big(R,\cdot_{\theta}:=\theta\circ\cdot,(P_{\omega})_{\omega\in S},\theta\big)$ is a Rota-Baxter family Hom-associative algebra, where the operation ``$\circ$" is the composition of maps.
\end{rem}

\begin{exam}\label{exam:dendriform algebra}Let $S=\{e,\sigma\}$ be a semigroup, where the element $e$ is a unit and $\sigma^{2}=e$. The Rota-Baxter family $(P_{\omega})_{\omega\in S}$ is denoted by $(P_{e},P_{\sigma})$, and the weight $\lambda\neq 0$.
\begin{enumerate}
\item\label{exam:dendriform algebra1} Suppose that $R={\bf k}\{e_{1}\}$, $e_{1}e_{1}=e_{1}$, $\theta=\mathrm{Id}_{R}$, $(P_{e},P_{\sigma})=(0,0)$. By directly computing, the 4-tuple $(R,\cdot,(P_{e},P_{\sigma}),\theta)$ is a Rota-Baxter family Hom-associative algebra.
\item\label{exam:dendriform algebra2} Suppose that $B={\bf k}\{e_{2}\}$, $e_{2}\cdot_{B} e_{2}=e_{2}$, $\theta_{B}(e_{2})=e_{2}$, $P_{e,\,B}(e_{2})=P_{\sigma,\,B}(e_{2})=-\lambda e_{2}$, then $(B,\cdot_{B},(P_{e,\,B},P_{\sigma,\,B}),\theta_{B})$ is a Rota-Baxter family Hom-associative algebra.
\item Suppose that $E={\bf k}\{e_{1},e_{2}\}$ satisfying the following conditions
\begin{align*}
\begin{aligned}
e_{1}\cdot_{E}e_{1}&=e_{1},\\
e_{2}\cdot_{E}e_{1}&=3e_{1},\\
P_{e,\,E}(e_{1})&=0,\\
P_{\sigma,\,E}(e_{1})&=0,\\
\theta_{E}(e_{1})&=e_{1},\\
\end{aligned}\quad
\begin{aligned}
e_{1}\cdot_{E}e_{2}&=-3e_{1}+2e_{2},\\
e_{2}\cdot_{E}e_{2}&=-9e_{1}+6e_{2},\\
P_{e,\,E}(e_{2})&=3\lambda e_{1}-\lambda e_{2},\\
P_{\sigma,\,E}(e_{2})&=3\lambda e_{1}-\lambda e_{2},\\
\theta_{E}(e_{2})&=-3e_{1}+2e_{2}.
\end{aligned}
\end{align*}
Then the 4-tuple $(E,\cdot_{E},(P_{e,\,E},P_{\sigma,\,E}),\theta_{E})$ is a Rota-Baxter family Hom-associative algebra.
\end{enumerate}
\end{exam}

\smallskip

\begin{defn}~\cite{AGK}
 Let $\crr_{1}$ and $R_{2}$ be two Rota-Baxter family Hom-associative algebras. A linear map $\varphi:\crr_{1}\rightarrow \crr_{2}$ is called a {\bf morphism} of Rota-Baxter family Hom-associative algebras if the following identities hold for all $x,y\in \crr_{1}$ and $\omega \in S$:
   \allowdisplaybreaks{
    \begin{align*}
   \varphi(x\cdot_{\crr_{1}} y)=\varphi(x)\cdot_{\crr_{2}} \varphi(y),\quad
  \varphi\big(P_{\omega,\,\crr_{1}}(x)\big)=P_{\omega,\,\crr_{2}}\big(\varphi(x)\big),\quad
    \varphi\big(\theta_{\crr_{1}}(x)\big)=\theta_{\crr_{2}}\big(\varphi(x)\big).
    \end{align*}
    }A morphism $\varphi$ is called an {\bf isomorphism} if it is also a bijective map. Two Rota-Baxter family Hom-associative algebras $\crr_{1}$ and $\crr_{2}$ are called {\bf isomorphic}, and we denote it by $\crr_{1}\cong \crr_{2}$, if there exists an isomorphism of Rota-Baxter family Hom-associative algebras $\varphi:\crr_{1}$ $\rightarrow \crr_{2}$.
\end{defn}

The module of Hom-associative algebras is defined in ~\cite{MS102}, and the module of Rota-Baxter Lie algebras is defined in ~\cite{PZ}, analogously we have the following definition.
\begin{defn} Let $\crr$ be a Rota-Baxter family Hom-associative algebra, $V$ a vector space and $\theta_{V},(P_{\omega,\,V})_{\omega\in S}:V\rightarrow V$ linear maps. Suppose that $\triangleright:R\times V\rightarrow V$ and $\triangleleft:V\times R\rightarrow V$ are both bilinear maps.
\begin{enumerate}
  \item The 4-tuple $\big(V,(P_{\omega,\,V})_{\omega\in S},\theta_{V},\triangleright\,(\mathrm{resp.}\, \triangleleft))$ is called a {\bf left (resp. right) $\crr$ module} if Eqs. ~\eqref{formulas:left module definition1}-\eqref{formulas:left module definition3} (resp. Eqs. ~\eqref{formulas:left module definition3}-\eqref{formulas:right module definition5}) hold for all $a,b\in R$, $x\in V$ and $\alpha,\beta\in S$:
    \begin{align}
   (ab)\triangleright \theta_{V}(x)&=\theta(a)\triangleright(b\triangleright x);\label{formulas:left module definition1}\\
  P_{\alpha}(a)\triangleright P_{\beta,\,V}(x)&=P_{\alpha\beta,\,V}\big(P_{\alpha}(a)\triangleright x+a\triangleright P_{\beta,\,V}(x)+\lambda\,a\triangleright x\big);\label{formulas:left module definition2}\\
  \theta_{V}\big(P_{\alpha,\,V}(x)\big)&=P_{\alpha,\,V}\big(\theta_{V}(x)\big);\label{formulas:left module definition3}\\
      \theta_{V}(x)\triangleleft(ab)&=(x\triangleleft a)\triangleleft \theta(b);\label{formulas:right module definition4}\\
      P_{\alpha,\,V}(x)\triangleleft P_{\beta}(a)&=P_{\alpha\beta,\,V}\big(P_{\alpha,\,V}(x)\triangleleft a+x\triangleleft P_{\beta}(a)+\lambda\,x\triangleleft a\big).\label{formulas:right module definition5}
    \end{align}
    \item The 5-tuple $\big(V,(P_{\omega,\,V})_{\omega\in S},\theta_{V},\triangleright,\triangleleft\big)$ is called a $\crr$ {\bf bimodule}, if the 4-tuple $\big(V,(P_{\omega,\,V})_{\omega\in S},\theta_{V},\triangleright\big)$ is a left $\crr$ module, $\big(V,(P_{\omega,\,V})_{\omega\in S},\theta_{V},\triangleleft\big)$ is a right $\crr$ module and the following identity holds for all $a,b\in R$ and $x\in V$:
    \begin{align}\label{formulas:bimodule definition}
         (a\triangleright x)\triangleleft \theta(b)&=\theta(a)\triangleright(x\triangleleft b).
    \end{align}
  \item Let $\big(V,(P_{\omega,\,V})_{\omega\in S},\theta_{V},\triangleright_{V}\big)$ and $\big(W,(P_{\omega,\,W})_{\omega\in S},\theta_{W},\triangleright_{W}\big)$ be
two left $\crr$ modules. A linear map $\varphi:V\rightarrow W$ is called a {\bf left $\crr$ module morphism} if the following identities hold for all $a\in R$, $x\in V$ and $\omega\in S$:
    \begin{align*}
  \varphi(a\triangleright_{V} x)=a\triangleright_{W}\varphi(x),\quad
  \varphi\big(P_{\omega,\,V}(x)\big)=P_{\omega,\,W}\big(\varphi(x)\big),\quad
    \varphi\big(\theta_{V}(x)\big)=\theta_{W}\big(\varphi(x)\big).
    \end{align*}
The right $\crr$ module morphism can be defined similarly. We call the linear map a $\crr$ bimodule morphism if it is both a left $\crr$ module morphism and a right $\crr$ module morphism.
   \end{enumerate}
\end{defn}

\subsection{The extension relations of Rota-Baxter family Hom-associative algebras}
Let $\crr$ be a Rota-Baxter family Hom-associative algebra, $E$ a vector space containing $R$ as a subspace, and $V$ a space complement of $R$ in $E$. In this case, an element of $E$ will be denoted by the form $a+x$ with $a\in R$ and $x\in V$. We denote by $\mathbf{Exts}(E,R)$ the set of extensions $\ee$ of $\crr$.
\begin{defn}\label{defn:equivalent cohomolous}
Let $\ee$ and $\eep$ be in $\mathbf{Exts}(E,R)$. In the following Diagram ~(\ref{diagram:equivalent cohomologous ee}), the inclusion map $i:R\rightarrow \cee$ and the canonical projection $\pi:\cee\rightarrow V$ are defined by
 $$i(a)=a,\,\pi(a+x)=x,\quad \text{ for }a\in R,\,x\in V,$$
then the two rows are both short exact sequences in the category $\mathbf{Vect}_{\bf k}$ of vector spaces. We say {\bf a linear map $\psi:\ee \rightarrow \eep$ stabilizes $R$ }if the left square is commutative in the category $\mathbf{Vect}_{\bf k}$. Similarly, we say {\bf $\psi$ co-stabilizes $V$ }if the right square is commutative in the category $\mathbf{Vect}_{\bf k}$.
\begin{enumerate}
\item Two extensions are called {\bf equivalent} if there exists an isomorphism of Rota-Baxter family Hom-associative algebras $\psi:\ee \rightarrow \eep$ which stabilizes $R$ in Diagram ~(\ref{diagram:equivalent cohomologous ee}). We denote it by $\ee\equiv \eep$.
\item Two equivalent extensions are called {\bf cohomologous} if the map $\psi$ also co-stabilizes $V$ in Diagram ~(\ref{diagram:equivalent cohomologous ee}). We denote it by $\ee\approx \eep$.
\end{enumerate}
\begin{align}\label{diagram:equivalent cohomologous ee}
\begin{split}
\xymatrix{
  0 \ar[r]^{} & R \ar[d]_{\mathrm {Id}} \ar[r]^{i\qquad\qquad} & \ee \ar[d]_{\psi} \ar[r]^{\qquad\qquad\pi} & V \ar[d]_{\mathrm {Id}} \ar[r]^{} & 0  \\
  0 \ar[r]^{} & R \ar[r]^{i\qquad\qquad} & \eep \ar[r]^{\qquad\qquad\pi} & V \ar[r]^{} & 0   }
\end{split}
\end{align}
\end{defn}

The relations $\equiv$ and $\approx$ defined on the set $\mathbf{Exts}(E,R)$ are both equivalence relations.
Suppose that $$\mathbf{Extd}(E,R):=\mathbf{Exts}(E,R)/\equiv,\qquad \mathbf{Extd^{\prime}}(E,R):=
\mathbf{Exts}(E,R)/\approx,$$
then the set $\mathbf{Extd}(E,R)$ gives a classification of the $ES$ problem and the set $\mathbf{Extd^{\prime}}(E,R)$ is another more strictly classifying object for the ES problem. Since any two cohomologous extensions are obviously equivalent, there exists a canonical projection $\mathbf{Extd^{\prime}}(E,R)\twoheadrightarrow \mathbf{Extd}(E,R)$.

For later use, we recall a basic conclusion of the set theory.
\begin{prop}\label{prop:induce map}
Let $A$ and $B$ be two sets. Let $\xi:A\rightarrow B$ be a bijective map, and $\approx$ an equivalence relation on $A$. Define a relation on $B$ as follows: $b_{1}\approx b_{2}$ if and only if $\xi^{-1}(b_{1})\approx \xi^{-1}(b_{2})$, for all $b_{1},b_{2}\in B$. Then we have the following properties:
\begin{enumerate}
\item\label{prop set item 1} The relation $\approx $ is an equivalence relation on $B$.
\item\label{prop set item 2}  The map $\xi$ induces a bijection $\bar{\xi}:A/\approx\,\,\rightarrow B/\approx,\, \bar{a}\,\mapsto \overline{\xi(a)}$, where $\bar{a}$ and $\overline{\xi(a)}$ are the equivalence classes of $a$ and $\xi(a)$, respectively.
\end{enumerate}
\end{prop}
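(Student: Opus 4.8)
The plan is to establish both parts by direct verification, exploiting the single structural fact that makes everything work: since $\xi$ is bijective, its inverse $\xi^{-1}$ is a genuine function satisfying $\xi^{-1}(\xi(a))=a$ and $\xi(\xi^{-1}(b))=b$ for all $a\in A$, $b\in B$. Every assertion will then follow by transporting the equivalence relation along these cancellation identities, so no new ideas are needed beyond careful bookkeeping.

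For part (a), I would check the three defining properties of an equivalence relation on $B$ one at a time, in each case reducing the statement about $B$ to the corresponding property of $\approx$ on $A$ via the definition $b_{1}\approx b_{2}\iff \xi^{-1}(b_{1})\approx\xi^{-1}(b_{2})$. Reflexivity of $\approx$ on $B$ follows from reflexivity on $A$ applied to the element $\xi^{-1}(b)$; symmetry and transitivity follow likewise by reading the defining biconditional forwards and then backwards, together with the corresponding properties of $\approx$ on $A$. No subtlety arises, because bijectivity guarantees that $\xi^{-1}$ assigns a well-defined unique element of $A$ to each element of $B$.

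For part (b), I would first verify that $\bar{\xi}$ is well-defined, i.e.\ independent of the chosen representative. Given $a_{1}\approx a_{2}$, I must show $\xi(a_{1})\approx\xi(a_{2})$ in $B$; by definition this amounts to $\xi^{-1}(\xi(a_{1}))\approx\xi^{-1}(\xi(a_{2}))$, which is precisely $a_{1}\approx a_{2}$ after using $\xi^{-1}\circ\xi=\mathrm{Id}_{A}$. Reading the same chain of equivalences in reverse yields injectivity of $\bar{\xi}$: if $\overline{\xi(a_{1})}=\overline{\xi(a_{2})}$ then $a_{1}\approx a_{2}$, hence $\bar{a}_{1}=\bar{a}_{2}$. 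Surjectivity is immediate from the surjectivity of $\xi$, since an arbitrary class $\bar{b}$ is the image $\bar{\xi}\big(\overline{\xi^{-1}(b)}\big)$ using $\xi\circ\xi^{-1}=\mathrm{Id}_{B}$.

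I do not anticipate any genuine obstacle, as the content is purely formal set theory. The only point requiring a little attention is to keep straight which direction of the defining biconditional is being invoked at each step and to apply the cancellation identities $\xi^{-1}\circ\xi=\mathrm{Id}_{A}$ and $\xi\circ\xi^{-1}=\mathrm{Id}_{B}$ at exactly the right moments; once these are fixed, both parts close in a few lines.
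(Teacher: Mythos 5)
Your proposal is correct and follows essentially the same route as the paper: the same verification of well-definedness, injectivity, and surjectivity for $\bar{\xi}$ via the cancellation identities $\xi^{-1}\circ\xi=\mathrm{Id}_{A}$ and $\xi\circ\xi^{-1}=\mathrm{Id}_{B}$. The only difference is that you spell out part (a), which the paper dismisses as a direct computation.
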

\begin{proof}
Since the proof of \ref{prop set item 1} is a direct computation, here we only proof \ref{prop set item 2}.

First, we prove that the map $\bar{\xi}$ is well defined. In fact, if $\overline{{a}_{1}}=\overline{{a}_{2}}$ with $a_{1},a_{2}\in A$, we have $a_{1}\approx a_{2}$, i.e., $\xi^{-1}(\xi(a_{1}))\approx\xi^{-1}(\xi(a_{2}))$, hence $\xi(a_{1})\approx\xi(a_{2})$, i.e., $\overline{\xi(a_{1})}=\overline{\xi(a_{2})}$.

Second, we prove that the map $\bar{\xi}$ is surjective. In fact, for all $\bar{b}\in B/\approx$, since $\xi$ is bijective, there exists a unique element $a\in A$ such that $\xi(a)=b$, hence $\bar{\xi}(\bar{a})=\overline{\xi(a)}=\bar{b}$.

Third, we prove that the map $\bar{\xi}$ is injective. In fact, for all $\overline{a_{1}},\overline{a_{2}}\in A/\approx$, if $\bar{\xi}(\overline{{a}_{1}})=\bar{\xi}(\overline{{a}_{2}})$, i.e., $\overline{\xi(a_{1})}=\overline{\xi(a_{2})}$, we have $\xi(a_{1})\approx\xi(a_{2})$, hence $a_{1}\approx a_{2}$, i.e., $\overline{{a}_{1}}=\overline{{a}_{2}}$.
\end{proof}

\section{Extending structure for Rota-Baxter family Hom-associative algebras}
In this section, we define extending datums and unified products of Rota-Baxter family Hom-associative algebras, and theoretically solve the ES problem. Moreover, we define flag datums and give an example of ES problem for Rota-Baxter family Hom-associative algebras. Finally, we introduce matched pairs of Rota-Baxter family Hom-associative algebras, and theoretically solve the factorization problem.

\subsection{Unified products of Rota-Baxter family Hom-associative algebras}

\begin{defn}
Let $\crr$ be a Rota-Baxter family Hom-associative algebra and $V$ a vector space.
Suppose that
\begin {align*}
\begin{aligned}
\triangleright&:\,R\times V\rightarrow V,\\
\rightharpoonup&:\,V\times R\rightarrow R,\\
\end{aligned}\quad
\begin{aligned}
\triangleleft&:\,V\times R\rightarrow V,\\
\leftharpoonup&:\,R\times V\rightarrow R,\\
\end{aligned}\quad
\begin{aligned}
f&:\,V\times V\rightarrow R,\\
\cdot_{V}&:\,V\times V\rightarrow V
\end{aligned}
\end{align*}
are bilinear maps, and
\begin {align*}
(Q_{\omega})_{\omega\in S}:\, V\rightarrow R,\quad
(P_{\omega,\,V})_{\omega\in S}:\,V\rightarrow V,\quad
\eta:\,V\rightarrow R, \quad
\theta_{V}:\,V\rightarrow V
\end{align*}
are linear maps. The system $\Omega(R,V)=\big(\triangleright,\triangleleft,\rightharpoonup,\leftharpoonup,f,\cdot_{V},(Q_{\omega})_{\omega\in S},(P_{\omega,\,V})_{\omega\in S},\eta,\theta_{V}\big)$ is called an {\bf extending datum} of $\crr$ through $V$.
\end{defn}
\begin{defn}
Let $\Omega(R,V)=(\triangleright,\triangleleft,\rightharpoonup,\leftharpoonup,f,\cdot_{V},(Q_{\omega})_{\omega\in S},(P_{\omega,\,V})_{\omega\in S},\eta,\theta_{V})$ be an extending datum.
For all $x,y\in V$, $a,b\in R$ and $\omega \in S$, define
\begin{align}\label{formulas:unified product}
\begin{split}
(a,x)\,\bar{\cdot}\,(b,y)&:=(ab+a\leftharpoonup y+x\rightharpoonup b+f(x,y),\,a\triangleright y+x\triangleleft b+x\cdot_{V} y);\\
\bar{P}_{\omega}(a,x)&:=\big(P_{\omega}(a)+Q_{\omega}(x),\,P_{\omega,\,V}(x)\big);\\
\bar{\theta}(a,x)&:=\big(\theta(a)+\eta(x),\,\theta_{V}(x)\big).
\end{split}
\end{align}
We denote the 4-tuple $\big(R\times V,\,\bar{\cdot}\,,(\bar{P}_{\omega})_{\omega\in S},\bar{\theta}\big)$ by $R\natural_{\Omega(R,V)} V=\rnv$ for simplicity. The object $\rnv$ is called a {\bf unified product} of $\crr$ and $\Omega(R,V)$, if $\rnv$ is a Rota-Baxter family Hom-associative algebra. In this case, the extending datum $\Omega(R,V)$ is called a {\bf Rota-Baxter family Hom extending structure} of $\crr$ through $V$. The maps $\triangleright,\triangleleft,\leftharpoonup$ and $\rightharpoonup$ are called {\bf actions} of $\Omega(R,V)$, $f$ is called the {\bf cocycle} of $\Omega(R,V)$.
\end{defn}
We denote by $\mathcal{O}(R,V)$ the set of all Rota-Baxter family Hom extending structures of $\crr$ through $V$.

\begin{thm}\label{thm:datum and unified product}
Let $\crr$ be a Rota-Baxter family Hom-associative algebra, $V$ a vector space and $\Omega(R,V)$ an extending datum of $\crr$ through $V$. Then the following statements are equivalent:
\begin{enumerate}
\item The object $\rnv$ is a unified product;
\item The following conditions hold for all $a,b\in R$, $x,y,z\in V$ and $\alpha,\beta\in S$:
  \begin{align*}
        (R1)\,\,\,\,&\text{$\big(V,(P_{\omega,\,V})_{\omega\in S},\theta_{V},\triangleright,\triangleleft\big)$ is a $\crr$ bimodule, i.e., satisfying {\rm Eqs.} ~\eqref{formulas:left module definition1}-\eqref{formulas:bimodule definition}.}\\
        (R2)\,\,\,\,&(ab)\, \eta(x)+(ab)\leftharpoonup \theta_{V}(x)=\theta(a)\,(b\leftharpoonup x)+\theta(a)\leftharpoonup(b\triangleright x);\\
        (R3)\,\,\,\,&(x\rightharpoonup a)\, \theta(b)+(x\triangleleft a)\rightharpoonup \theta(b)=\eta(x)\,(ab)+\theta_{V}(x)\rightharpoonup(ab);\\
        (R4)\,\,\,\,&(a\leftharpoonup x)\, \theta(b)+(a\triangleright x)\rightharpoonup \theta(b)=\theta(a)\,(x\rightharpoonup b)+\theta(a)\leftharpoonup(x\triangleleft b);\\
        (R5)\,\,\,\,&\eta(x)\,(y\rightharpoonup a)+\eta(x)\leftharpoonup(y\triangleleft a)+\theta_{V}(x)\rightharpoonup(y\rightharpoonup a)+f\big( \theta_{V}(x),y\triangleleft a\big)\\
        =&f(x,y)\,\theta(a)+(x\cdot_{V} y)\rightharpoonup\theta(a);\\
        (R6)\,\,\,\,&(x\cdot_{V} y)\triangleleft\theta(a)=\eta(x)\triangleright(y\triangleleft a)+\theta_{V}(x)\triangleleft(y\rightharpoonup a)+\theta_{V}(x)\cdot_{V}(y\triangleleft a);\\
        (R7)\,\,\,\,&(a\leftharpoonup x)\,\eta(y)+(a\leftharpoonup x)\leftharpoonup \theta_{V}(y)+(a\triangleright x)\rightharpoonup \eta(y)+f\big(a\triangleright x, \theta_{V}(y)\big)\\
        =&\theta(a)\,f(x,y)+ \theta(a)\leftharpoonup (x\cdot_{V} y) ;\\
        (R8)\,\,\,\,&(a\leftharpoonup x)\triangleright\theta_{V}(y)+(a\triangleright x)\triangleleft \eta(y)+(a\triangleright x)\cdot_{V} \theta_{V}(y)= \theta(a)\triangleright (x\cdot_{V} y) ;\\
        (R9)\,\,\,\,&(x\rightharpoonup a)\, \eta(y)+(x\rightharpoonup a)\leftharpoonup \theta_{V}(y)+(x\triangleleft a)\rightharpoonup \eta(y)+f\big(x\triangleleft a,\theta_{V}(y)\big)\\=
        &\eta(x)\,(a\leftharpoonup y)+\eta(x)\leftharpoonup(a\triangleright y)+ \theta_{V}(x)\rightharpoonup(a\leftharpoonup y)+f\big(\theta_{V}(x),a\triangleright y\big) ;\\
        (R10)\,\,&(x\rightharpoonup a)\triangleright \theta_{V}(y)+(x\triangleleft a)\triangleleft \eta(y)+(x\triangleleft a)\cdot_{V} \theta_{V}(y)\\=&\eta(x)\triangleright(a\triangleright y)
        +\theta_{V}(x)\triangleleft(a\leftharpoonup y)+ \theta_{V}(x)\cdot_{V}(a\triangleright y);\\
        (R11)\,\,&f(x,y)\,\eta(z)+f(x,y)\leftharpoonup\theta_{V}(z)+(x\cdot_{V} y)\rightharpoonup\eta(z)+f\big(x\cdot_{V} y,\theta_{V}(z)\big)\\=
        &\eta(x)\,f(y,z)+\eta(x)\leftharpoonup(y\cdot_{V} z)+\theta_{V}(x)\rightharpoonup f(y,z)+f\big(\theta_{V}(x),y\cdot_{V} z\big);\\
        (R12)\,\,&f(x,y)\triangleright\theta_{V}(z)+(x\cdot_{V} y)\triangleleft\eta(z)+(x\cdot_{V} y)\cdot_{V}\theta_{V}(z)\\=
        &\eta(x)\triangleright(y\cdot_{V} z)+\theta_{V}(x)\triangleleft f(y,z)+\theta_{V}(x)\cdot_{V} (y\cdot_{V} z);\\
        (R13)\,\,&P_{\alpha}(a)\,Q_{\beta}(x)+P_{\alpha}(a)\leftharpoonup P_{\beta,\,V}(x)=P_{\alpha\beta}\big(P_{\alpha}(a)\leftharpoonup x+a\,Q_{\beta}(x)\\
        &\,\,\,\,+a\leftharpoonup P_{\beta,\,V}(x)+\lambda\,a\leftharpoonup x\big)+Q_{\alpha\beta}\big(P_{\alpha}(a)\triangleright x+a\triangleright P_{\beta,\,V}(x)+\lambda\,a\triangleright x\big)\\
        (R14)\,\,&Q_{\alpha}(x)\,P_{\beta}(a)+P_{\alpha,\,V}(x)\rightharpoonup P_{\beta}(a)=P_{\alpha\beta}\big(Q_{\alpha}(x)\,a+P_{\alpha,\,V}(x)\rightharpoonup a\\
        &\,\,\,\,+x\rightharpoonup P_{\beta}(a)+\lambda\,x\rightharpoonup a\big)+Q_{\alpha\beta}\big(P_{\alpha,\,V}(x)\triangleleft a+x\triangleleft P_{\beta}(a)+\lambda\,x\triangleleft a\big)\\
        (R15)\,\,&Q_{\alpha}(x)\,Q_{\beta}(y)+Q_{\alpha}(x)\leftharpoonup P_{\beta,\,V}(y)+P_{\alpha,\,V}(x)\rightharpoonup Q_{\beta}(y)+f\big(P_{\alpha,\,V}(x),P_{\beta,\,V}(y)\big)\\
       = &P_{\alpha\beta}\Big(Q_{\alpha}(x)\leftharpoonup y+f\big(P_{\alpha,\,V}(x),y\big)+x\rightharpoonup Q_{\beta}(y)+f\big(x,P_{\beta,\,V}(y)\big)+\lambda\, f(x,y)\Big)\\
        &+Q_{\alpha\beta}\big(Q_{\alpha}(x)\triangleright y+P_{\alpha,\,V}(x)\cdot_{V} y+x\triangleleft Q_{\beta}(y)+x\cdot_{V} P_{\beta,\,V}(y)+\lambda\,x\cdot_{V} y\big);\\
        (R16)\,\,&Q_{\alpha}(x)\triangleright P_{\beta,\,V}(y)+P_{\alpha,\,V}(x)\triangleleft Q_{\beta}(y)+P_{\alpha,\,V}(x)\cdot_{V} P_{\beta,\,V}(y)\\=
        & P_{\alpha\beta,\,V}\big(Q_{\alpha}(x)\triangleright y+P_{\alpha,\,V}(x)\cdot_{V} y+x\triangleleft Q_{\beta}(y)+x\cdot_{V} P_{\beta,\,V}(y)+\lambda\,x\cdot_{V} y\big);\\
        (R17)\,\,&\theta\big(Q_{\alpha}(x)\big)+\eta\big(P_{\alpha,\,V}(x)\big)=P_{\alpha}\big(\eta(x)\big)+Q_{\alpha}\big(\theta_{V}(x)\big).
    \end{align*}
  \end{enumerate}
    \end{thm}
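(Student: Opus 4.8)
The plan is to prove the equivalence by direct verification, exploiting the fact that the underlying space of $\rnv$ is the direct sum $R\oplus V$ and that all three defining axioms of a Rota-Baxter family Hom-associative algebra are multilinear in their arguments. Concretely, $\rnv$ is a Rota-Baxter family Hom-associative algebra if and only if, for all elements $(a,x),(b,y),(c,z)\in R\times V$, the Hom-associativity law $\bar\theta\big((a,x)\big)\,\bar\cdot\,\big((b,y)\,\bar\cdot\,(c,z)\big)=\big((a,x)\,\bar\cdot\,(b,y)\big)\,\bar\cdot\,\bar\theta\big((c,z)\big)$, the Rota-Baxter family relation for $\big(\bar\cdot,(\bar P_\omega)_{\omega\in S}\big)$, and the compatibility $\bar P_\omega\big(\bar\theta(a,x)\big)=\bar\theta\big(\bar P_\omega(a,x)\big)$ all hold. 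Since every map entering $\bar\cdot$, $\bar P_\omega$, $\bar\theta$ is linear or bilinear, each of these three identities is multilinear, so it suffices to test it on the generators $(a,0)$ and $(0,x)$ and then to compare the $R$-component and the $V$-component of the two sides separately, using that $f(0,-)=f(-,0)=0$ and that $\triangleright,\triangleleft,\rightharpoonup,\leftharpoonup$ vanish whenever an argument is $0$.

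First I would treat the Hom-associativity law. There are $2^3=8$ choices for whether each of the three arguments is of type $(a,0)$ or $(0,x)$, and each choice yields one $R$-component equation and one $V$-component equation. The all-$R$ case $\big((a,0),(b,0),(c,0)\big)$ reduces on the $R$-component to the Hom-associativity of $\crr$ itself (which holds by hypothesis) and is trivial on the $V$-component. Organizing the remaining seven cases by the number of $V$-entries, one reads off: the three cases with a single $V$-entry produce, on their $R$-components, the identities $(R2)$, $(R3)$, $(R4)$, and, on their $V$-components, the three ``Hom-associativity-type'' module axioms, namely Eqs.~\eqref{formulas:left module definition1}, \eqref{formulas:right module definition4}, and \eqref{formulas:bimodule definition}; the three cases with two $V$-entries produce the pairs $\{(R7),(R8)\}$, $\{(R9),(R10)\}$, and $\{(R5),(R6)\}$; and the all-$V$ case $\big((0,x),(0,y),(0,z)\big)$ produces $\{(R11),(R12)\}$. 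For instance, the case $\big((a,0),(b,0),(0,z)\big)$ expands on the $R$-component to $\theta(a)(b\leftharpoonup z)+\theta(a)\leftharpoonup(b\triangleright z)=(ab)\,\eta(z)+(ab)\leftharpoonup\theta_V(z)$, which is exactly $(R2)$, and on the $V$-component to $\theta(a)\triangleright(b\triangleright z)=(ab)\triangleright\theta_V(z)$, which is Eq.~\eqref{formulas:left module definition1}.

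Next I would handle the Rota-Baxter family relation for $\bar P$, which is bilinear and so reduces to the four cases on the generators: the all-$R$ case gives the Rota-Baxter family relation of $\crr$ on its $R$-component and is trivial on the $V$-component; the cases $\big((a,0),(0,y)\big)$ and $\big((0,x),(b,0)\big)$ give $(R13)$ and $(R14)$ on their $R$-components and Eqs.~\eqref{formulas:left module definition2} and \eqref{formulas:right module definition5} on their $V$-components; and the all-$V$ case $\big((0,x),(0,y)\big)$ gives $(R15)$ and $(R16)$. Finally, the compatibility $\bar P_\omega\circ\bar\theta=\bar\theta\circ\bar P_\omega$, being linear, reduces to the two generator cases: the $R$-type argument $(a,0)$ recovers $P_\omega\theta=\theta P_\omega$ of $\crr$ and is trivial on $V$, while the $V$-type argument $(0,x)$ yields $(R17)$ on the $R$-component and Eq.~\eqref{formulas:left module definition3} on the $V$-component. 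Collecting all cases, the six module axioms Eqs.~\eqref{formulas:left module definition1}--\eqref{formulas:bimodule definition} assemble precisely into the bimodule condition $(R1)$, while the $R$-component equations are exactly $(R2)$--$(R17)$; this establishes both implications at once. I do not expect any deep obstruction here: the argument is a finite, purely formal expansion, and the only real difficulty is the \emph{bookkeeping}, ensuring that the $8+4+2$ cases and their two projections are enumerated without omission or duplication, that the vanishing conventions for $f$ and for the four actions are applied consistently, and that each of the $17$ displayed conditions together with the six module relations is produced exactly once in the claimed correspondence.
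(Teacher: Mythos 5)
Your proposal is correct and follows essentially the same route as the paper's proof: reduce the three defining axioms of $\rnv$ to the spanning set $\{(a,0)\}\cup\{(0,x)\}$ by multilinearity, split each resulting identity into its $R$- and $V$-components, and match the $8+4+2$ generator cases with (R1)--(R17) exactly as in the paper's Table~\ref{table:Equivalent conditions} (your worked case $\big((a,0),(b,0),(0,z)\big)$ giving (R2) and Eq.~\eqref{formulas:left module definition1} checks out, as does your assignment of the remaining cases, including the automatic all-$R$ cases). No gap to report.
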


\begin{proof} The object $\rnv$ is a unified product if and only if $\Omega(R,V)$ is an extending datum such that the following conditions hold for all $a,b,c\in R$, $x,y,z\in V$, $\alpha,\beta\in S$:
        \begin{align}
    \big((a,x)\,\bar{\cdot}\, (b,y)\big)&\,\bar{\cdot}\, \bar{\theta}(c,z)=\bar{\theta}(a,x)\,\bar{\cdot}\, \big((b,y)\,\bar{\cdot}\, (c,z)\big);\label{zformula(not):unified product B}\\
    \bar{P}_{\alpha}(a,x) \,\bar{\cdot}\, \bar{P}_{\beta}(b,y)&=\bar{P}_{\alpha\beta}\big(\bar{P}_{\alpha}(a,x) \,\bar{\cdot}\, (b,y)+   (a,x)\,\bar{\cdot}\, \bar{P}_{\beta}(b,y)+\lambda\,(a,x) \,\bar{\cdot}\, (b,y)\big);\label{zformula(not):unified product RB}\\
    \bar{\theta}\big(\bar{P}_{\alpha}(a,x)\big)&=\bar{P}_{\alpha}\big(\bar{\theta}(a,x)\big).\label{zformula(not):unified product T}
    \end{align}

Since $\rnv$ is a direct sum of vector spaces $R$ and $V$, Eqs. ~\eqref{zformula(not):unified product B}-\eqref{zformula(not):unified product T} hold if and only if they hold for all generators of $\rnv$, i.e., for the set $\{(a,0)|a\in R\}\cup \{(0,x)|x\in V\}$. We have
\begin{align*}
    0=&\bar{P}_{\alpha}(a,0)\,\bar{\cdot}\, \bar{P}_{\beta}(0,x)-\bar{P}_{\alpha\beta}\big(\bar{P}_{\alpha}(a,0) \,\bar{\cdot}\, (0,x) +(a,0)\,\bar{\cdot}\, \bar{P}_{\beta}(0,x)+\lambda\,(a,0) \,\bar{\cdot}\, (0,x)\big)\\
 =&\big(P_{\alpha}(a),0\big) \,\bar{\cdot}\, \big(Q_{\beta}(x),P_{\beta,\,V}(x)\big)-\bar{P}_{\alpha\beta}\Big(\big(P_{\alpha}(a),0\big) \,\bar{\cdot}\, (0,x)+(a,0)\,\bar{\cdot}\, \big(Q_{\beta}(x),P_{\beta,\,V}(x)\big) \\
 &+\lambda\,(a\leftharpoonup x,a\triangleright x)\Big)\\
 =&\big(P_{\alpha}(a)\,Q_{\beta}(x)+P_{\alpha}(a)\leftharpoonup P_{\beta,\,V}(x),P_{\alpha}(a)\triangleright P_{\beta,\,V}(x)\big)-\bar{P}_{\alpha\beta}\big(P_{\alpha}(a)\leftharpoonup x+a\,Q_{\beta}(x)\\
        &+a\leftharpoonup P_{\beta,\,V}(x)+\lambda\,a\leftharpoonup x,P_{\alpha}(a)\triangleright x+a\triangleright P_{\beta,\,V}(x)+\lambda\,a\triangleright x \big)\\
 =&\big(P_{\alpha}(a)\,Q_{\beta}(x)+P_{\alpha}(a)\leftharpoonup P_{\beta,\,V}(x),P_{\alpha}(a)\triangleright P_{\beta,\,V}(x)\big)-\Big(P_{\alpha\beta}\big(P_{\alpha}(a)\leftharpoonup x+a\,Q_{\beta}(x)\\
        &+a\leftharpoonup P_{\beta,\,V}(x)+\lambda\,a\leftharpoonup x\big)+Q_{\alpha\beta}\big(P_{\alpha}(a)\triangleright x+a\triangleright P_{\beta,\,V}(x)+\lambda\,a\triangleright x\big),P_{\alpha\beta,\,V}\big(P_{\alpha}(a)\triangleright x\\
        &+a\triangleright P_{\beta,\,V}(x)+\lambda\,a\triangleright x\big)\Big).
\end{align*}
Hence,(R13) and Eq. ~\eqref{formulas:left module definition2} hold if and only if Eq. ~(\ref{zformula(not):unified product RB}) holds for the pair $(a,0)$, $(0,x)$ with $a\in R$, $x\in V$.

\begin{table}[htbp]
  \centering
\begin{tabular}{ccccc}
\toprule
\multicolumn{2}{c}{\phantom{12345678}condition 1}  & condition 2\\
\midrule
  \multirow{2}*{Eq. ~(\ref{zformula(not):unified product RB}) holds for the pair}&$(0,x)$, $(a,0)$
   &(R14) and Eq. ~\eqref{formulas:right module definition5} \\
      ~&$(0,x)$, $(0,y)$ &(R15) and (R16)\\

  \hline
 \multirow{7}*{Eq. ~(\ref{zformula(not):unified product B}) holds for the triple }&$(a,0)$, $(b,0)$, $(0,x)$
   &(R2) and Eq. ~\eqref{formulas:left module definition1}\\
      ~&$(0,x)$, $(a,0)$, $(b,0)$ &(R3) and Eq. ~\eqref{formulas:right module definition4}\\

      ~&$(a,0)$, $(0,x)$, $(b,0)$ &(R4) and Eq. ~\eqref{formulas:bimodule definition}\\

%
      ~&$(0,x)$, $(0,y)$, $(a,0)$ &(R5) and (R6)\\

      ~&$(a,0)$, $(0,x)$, $(0,y)$ &(R7) and (R8)\\

      ~&$(0,x)$, $(a,0)$, $(0,y)$ & (R9) and (R10)\\

      ~&$(0,x)$, $(0,y)$, $(0,z)$ & (R11) and (R12)\\

  \hline

 Eq. ~(\ref{zformula(not):unified product T}) holds for &$(0,x)$
   &(R17) and Eq. ~\eqref{formulas:left module definition3}\\

\bottomrule
\end{tabular}
\\[5pt]
  \caption{Equivalent conditions}\label{table:Equivalent conditions}
  \vspace{-1em}
\end{table}
Similarly, we have the specific equivalent conditions, refer to Table ~\ref{table:Equivalent conditions}. After tedious computation, we have proved the theorem.
 \end{proof}

According to Theorem ~\ref{thm:datum and unified product}, a Rota-Baxter family Hom extending structure of $\crr$ through $V$ will be viewed as an extending datum $\Omega(R,V)$ satisfying the conditions (R1)-(R17).

{\em Note:} In the rest of this paper, we will delete the trivial maps of $\Omega(R,V)$ for simplicity. For example, if the maps $\triangleleft$ and $\triangleright$ are both trivial, then the extending datum will be denoted by $\Omega(R,V)=(\rightharpoonup,\leftharpoonup,f,\cdot_{V},(Q_{\omega})_{\omega\in S},(P_{\omega,\,V})_{\omega\in S},\eta,\theta_{V})$ for simplicity. Now we will give some special cases of unified products as examples.

\begin{exam}\label{exam:direct sum and unified}
The extending datum $\Omega(R,V)=\big(\cdot_{V},(P_{\omega,\,V})_{\omega\in S},\theta_{V}\big)$ is a Rota-Baxter family Hom extending structure if and only if the 4-tuple $\big(V,\cdot_{V},(P_{\omega,\,V})_{\omega\in S},\theta_{V}\big)$ is a Rota-Baxter family Hom-associative algebra in Theorem ~\ref{thm:datum and unified product}. In this case, the corresponding unified product is a direct product of $\crr$ and $B$ as Rota-Baxter family Hom-associative algebras.
\end{exam}


\begin{exam}Let $\crr$ be a Rota-Baxter family Hom-associative algebra with Rota-Baxter family $(P_{\omega})_{\omega\in S}$ trivial, $V$ a vector space and $\Omega(R,V)$ an extending datum of $\crr$ through $V$.
\begin{enumerate}
\item Suppose that the semigroup $S=\{e\}$, then the triple $(R,\cdot,\theta)$ is a Hom-associative algebra and the Rota-Baxter family $(Q_{\omega})_{\omega\in S}$ is simply denoted by $Q$. Extending datum $\Omega(\crr,V)=(\triangleright,\triangleleft,Q,\theta_{V})$ is a Rota-Baxter family Hom extending structure if and only if the 4-tuple $\big(V,\theta_{V},\triangleright,\triangleleft\big)$ is a $(R,\cdot,\theta)$ bimodule, and for all $x,y\in V$, the following conditions hold:
\begin{align*}
Q(\theta_{V}(x))=\theta(Q(x)),\quad
Q(x)Q(y)=Q(Q(x)\triangleright y+x\triangleleft Q(y)).
 \end{align*}
i.e., $Q$ is an $O$ operator, more details refer to ~\cite[Definition~{\rm 2.3}]{CMM}. 
\item Suppose that $\theta=\mathrm{Id}$, then the pair $(R,\cdot)$ is an associative algebra. Extending datum $\Omega(\crr,V)=(\triangleright,\triangleleft,(Q_{\omega})_{\omega\in S},\theta_{V}=\mathrm{Id_{V}})$ is a Rota-Baxter family Hom extending structure  if and only if the triple $\big(V,\triangleright,\triangleleft\big)$ is $(R,\cdot)$ bimodule, and for all $x,y\in V$, the following condition holds:
\begin{align*}
Q_{\alpha}(x)Q_{\beta}(y)=Q_{\alpha\beta}(Q_{\alpha}(x)\triangleright y+x\triangleleft Q_{\beta}(y)),
 \end{align*}
i.e., the triple $(R,V,(Q_{\omega})_{\omega\in S})$ is a relative Rota-Baxter family algebra, more details refer to ~\cite[Definition~{\rm 2.2}]{Das}.
\end{enumerate}
\end{exam}


Let $\crr$ be a Rota-Baxter family Hom-associative algebra, $E$ a vector space containing $R$ as a subspace and $V$ a space complement of $R$ in $E$. Suppose that the Rota-Baxter family Hom extending structure $\Omega(R,V)\in \mathcal{O}(R,V)$ and $\rnv$ is the corresponding unified product. Similar to Definition~\ref{defn:equivalent cohomolous}, we define two relations $\equiv$ and $\approx$ between the unified product $\rnv$ and the extension $\ee\in \mathbf{Exts}(E,R)$ in the following Diagram ~(\ref{diagram:equivalent cohomologous ue}), and denoted by $\rnv\equiv \ee$ and $\rnv\approx \ee$ respectively.
\begin{align}\label{diagram:equivalent cohomologous ue}
\begin{split}
\xymatrix{
  0 \ar[r]^{} & R \ar[d]_{\mathrm {Id}} \ar[r]^{i^{\prime}\,\,\,\,\,} & \rnv \ar[d]_{\varphi} \ar[r]^{\,\,\,\,\,\pi^{\prime}} & V \ar[d]_{\mathrm {Id}} \ar[r]^{} & 0  \\
  0 \ar[r]^{} & R \ar[r]^{i\qquad\qquad} & \ee \ar[r]^{\qquad\qquad\pi} & V \ar[r]^{} & 0   }
\end{split}
\end{align}
where the maps $i,\pi$ are defined in Diagram ~(\ref{diagram:equivalent cohomologous ee}) and $i^{\prime},\pi^{\prime}$ are defined by $$i^{\prime}(a)=(a,0),\quad \pi^{\prime}(a,x)=x,\, a\in R,\, x\in V.$$

\begin{thm}\label{thm:extension to unified}
Let $\crr\subset \cee$ be an extension of Rota-Baxter family Hom-associative algebras. Then there exists a Rota-Baxter family Hom extending structure $\Omega(R,V)$ of $\crr$ through $V$, such that $\rnv\approx \cee$.
\end{thm}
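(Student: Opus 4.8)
The plan is to use the space complement $V$ of $R$ in $E$ to transport the Rota-Baxter family Hom-associative structure of $E$ onto the direct sum $R\times V$, thereby realizing $E$ as a unified product. First I would fix the decomposition $E=R\oplus V$ as vector spaces, so that every element of $E$ is uniquely written as $a+x$ with $a\in R$, $x\in V$, and I would let $p:E\to R$ and $q:E\to V$ denote the canonical linear projections associated to this direct sum. The idea is to read off all ten structure maps of the extending datum $\Omega(R,V)$ by decomposing each structure operation of $E$ into its $R$-component and $V$-component. Concretely, for $a,b\in R$ and $x,y\in V$ I would define, using the multiplication $\cdot_E$ of $E$,
\begin{align*}
a\leftharpoonup y&:=p(a\cdot_E y), & a\triangleright y&:=q(a\cdot_E y),\\
x\rightharpoonup b&:=p(x\cdot_E b), & x\triangleleft b&:=q(x\cdot_E b),\\
f(x,y)&:=p(x\cdot_E y), & x\cdot_V y&:=q(x\cdot_E y),
\end{align*}
and similarly $Q_\omega(x):=p(P_{\omega,E}(x))$, $P_{\omega,V}(x):=q(P_{\omega,E}(x))$, $\eta(x):=p(\theta_E(x))$, $\theta_V(x):=q(\theta_E(x))$. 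I also need to check that the restriction of $\cdot_E$, $(P_{\omega,E})_{\omega\in S}$, and $\theta_E$ to $R$ recovers the original structure of $R$; this is exactly the hypothesis that $R$ is a subalgebra of $E$, so $a\cdot_E b=ab\in R$, $P_{\omega,E}(a)=P_\omega(a)$, and $\theta_E(a)=\theta(a)$ for $a,b\in R$.

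Next I would verify that with these definitions the multiplication formula, the family-operator formula, and the twisting-map formula of the unified product in Eq.~\eqref{formulas:unified product} coincide, under the identification $(a,x)\leftrightarrow a+x$, with the operations of $E$. This is essentially bookkeeping: expanding $(a+x)\cdot_E(b+y)$ by bilinearity into the four terms $a\cdot_E b$, $a\cdot_E y$, $x\cdot_E b$, $x\cdot_E y$, then splitting each of the mixed terms into its $R$- and $V$-components via $p$ and $q$, reproduces exactly the right-hand side of the unified-product multiplication; the same split applied to $P_{\omega,E}(a+x)$ and $\theta_E(a+x)$ gives $\bar P_\omega$ and $\bar\theta$. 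Consequently the linear bijection $\varphi:R\natural V\to E$, $\varphi(a,x)=a+x$, is an isomorphism of Rota-Baxter family Hom-associative algebras, provided we already know the target is such an algebra — which it is, being $E$.

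Having produced the isomorphism $\varphi$, I would invoke Theorem~\ref{thm:datum and unified product}: since $R\natural V\cong E$ is a Rota-Baxter family Hom-associative algebra, the object $R\natural V$ is a unified product, hence $\Omega(R,V)$ automatically satisfies conditions (R1)--(R17) and is a genuine Rota-Baxter family Hom extending structure. (Alternatively, one transports the axioms across $\varphi$ directly.) Finally I would confirm that $\varphi$ realizes the cohomologous relation $\rnv\approx\cee$ by checking commutativity of both squares in Diagram~\eqref{diagram:equivalent cohomologous ue}: for the left square, $\varphi(i'(a))=\varphi(a,0)=a=i(a)$, so $\varphi$ stabilizes $R$; for the right square, $\pi(\varphi(a,x))=\pi(a+x)=x=\pi'(a,x)$, so $\varphi$ co-stabilizes $V$. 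Together these give $\rnv\approx\cee$, as required.

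The routine but error-prone part is the component-wise expansion in the second paragraph, where one must be careful that the projections $p,q$ are applied consistently and that the decomposition $a\cdot_E b\in R$ genuinely lands in $R$ (so that its $V$-component vanishes and no spurious terms appear). The only genuine subtlety — and the step I would flag as the main obstacle — is verifying that $R$ being a \emph{subalgebra} forces $a\cdot_E b$, $P_{\omega,E}(a)$, and $\theta_E(a)$ to stay inside $R$ for $a,b\in R$; this closure is what makes the $R\times R$ block of each operation match the original $R$-structure rather than producing extra $V$-components, and it is precisely what the definition of extension guarantees. Once this closure is in hand, the rest is a matter of matching terms, and Theorem~\ref{thm:datum and unified product} does the heavy lifting of certifying the axioms.
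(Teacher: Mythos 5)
Your proposal is correct and follows essentially the same route as the paper: the paper also defines the extending datum by splitting each operation of $E$ into its $R$- and $V$-components (phrased via a linear retraction $\rho$ and $\mathrm{id}-\rho$, which are exactly your $p$ and $q$), shows the identification $(a,x)\mapsto a+x$ intertwines the unified-product formulas with the operations of $E$, transports the axioms from $E$ back to $R\natural V$ to certify (R1)--(R17) via Theorem~\ref{thm:datum and unified product}, and then checks stabilization and co-stabilization. The closure point you flag (that $R$ being a subalgebra keeps $a\cdot_E b$, $P_{\omega,E}(a)$, $\theta_E(a)$ inside $R$) is indeed exactly what the paper's definition of extension supplies.
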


\begin{proof}
 In the category $\mathbf{Vect}_{{\bf k}}$, there exists a linear retraction $\rho:\cee\rightarrow \crr$, i.e., a linear map $\rho$ satisfying $\rho\, i=\mathrm{Id}_{R}$. Then $V=\mathrm{ker}\,\rho$ is a space complement of $R$ in $E$.

First, for all $a\in R$, $x,y\in V$ and $\omega\in S$, define the extending datum $\Omega(R,V)$ as follows:
      \begin{align}\label{formulas:extension to unified}
      \begin{split}
        a\triangleright x&:=a\cdot_{E} x-\rho(a\cdot_{E} x);\\
        a\leftharpoonup x&:=\rho(a\cdot_{E} x);\\
        f(x,y)&:=\rho(x\cdot_{E} y);\\
        Q_{\omega}(x)&:=\rho\big(P_{\omega,\,E}(x)\big);\\
        \eta(x)&:=\rho\big(\theta_{E}(x)\big);
       \end{split}\qquad
      \begin{split}
        x\triangleleft a&:=x\cdot_{E} a-\rho(x\cdot_{E} a);\\
        x\rightharpoonup a&:=\rho(x\cdot_{E} a);\\
        x\cdot_{V} y&:=x\cdot_{E} y-\rho(x\cdot_{E} y);\\
        P_{\omega,\,V}(x)&:=P_{\omega,\,E}(x)-\rho\big(P_{\omega,\,E}(x)\big);\\
       \theta_{V}(x)&:=\theta_{E}(x)-\rho\big(\theta_{E}(x)\big).
       \end{split}
        \end{align}
Then we obtain Diagram ~\eqref{diagram:equivalent cohomologous ue}, where the map $\varphi$ is defined as follows:
\begin{align*}
\begin{split}
\varphi:\,\,\rnv&\rightarrow \big(E,\cdot_{E},(P_{\omega,\,E})_{\omega\in S},\theta_{E}\big)\\
(a,x)&\mapsto a+x,for\,\, all\,\, a \in R,x\in V.
\end{split}
\end{align*}
The linear map $\varphi$ is a linear isomorphism with the inverse map defined by $\varphi^{-1}(u)=(\rho(u),u-\rho(u))$, for all $u\in E$.

Second, we prove that the extending datum $\Omega(R,V)$ is a Rota-Baxter family Hom extending structure. By Theorem ~\ref{thm:datum and unified product}, we need to prove that $\rnv$ is a Rota-Baxter family Hom-associative algebra.
For all $a,b\in R$, $x,y \in V$, $\omega\in S$, we have
\begin{align}
\varphi\big((a,x)\,\bar{\cdot}\,(b,y)\big)
&=\varphi\big(ab+a\leftharpoonup y+x\rightharpoonup b+f(x,y),a\triangleright y+x\triangleleft b+x\cdot_{V} y\big)\quad\text{(by Eq.~(\ref{formulas:unified product}))}\nonumber\\
&=ab+a\leftharpoonup y+x\rightharpoonup b+f(x,y)+a\triangleright y+x\triangleleft b+x\cdot_{V} y\nonumber\\
&=ab+(a\leftharpoonup y+a\triangleright y)+(x\rightharpoonup b+x\triangleleft b)+\big(f(x,y)+x\cdot_{V} y\big)\nonumber\\
&=a\cdot_{E} b+a\cdot_{E} y+x\cdot_{E} b+x\cdot_{E} y\quad\text{(by Eq.~(\ref{formulas:extension to unified}))}\nonumber\\
&=(a+x)\cdot_{E}(b+y)\nonumber\\
&=\varphi(a,x)\cdot_{E}\varphi(b,y).\label{formulas:preserve multiplication1}\\
\varphi(\bar{P}_{\omega}(a,x))
&=\varphi\big(P_{\omega}(a)+Q_{\omega}(x),P_{\omega,\,V}(x)\big)\quad\text{(by Eq.~(\ref{formulas:unified product}))}\nonumber\\
&=P_{\omega}(a)+Q_{\omega}(x)+P_{\omega,\,V}(x)\nonumber\\
&=P_{\omega,\,E}(a)+P_{\omega,\,E}(x)\quad\text{(by Eq.~(\ref{formulas:extension to unified}))}\nonumber\\
&=P_{\omega,\,E}(a+x)\nonumber\\
&=P_{\omega,\,E}\big(\varphi(a,x)\big);\label{formulas:preserve multiplication2}\\
\varphi\big(\bar{\theta}(a,x)\big)
&=\varphi\big(\theta(a)+\eta(x),\theta_{V}(x)\big)\quad\text{(by Eq.~(\ref{formulas:unified product}))}\nonumber\\
&=\theta(a)+\eta(x)+\theta_{V}(x)\nonumber\\
&=\theta_{E}(a)+\theta_{E}(x)\quad\text{(by Eq.~(\ref{formulas:extension to unified}))}\nonumber\\
&=\theta_{E}(a+x)\nonumber\\
&=\theta_{E}\big(\varphi(a,x)\big).\label{formulas:preserve multiplication3}
\end{align}
Then we have
\begin{align*}
    &\big((a,x)\,\bar{\cdot}\, (b,y)\big)\,\bar{\cdot}\, \bar{\theta}(c,z)\\
    =&\varphi^{-1}\varphi\Big(\big((a,x)\,\bar{\cdot}\, (b,y)\big)\,\bar{\cdot}\, \bar{\theta}(c,z)\Big)\\
    =&\varphi^{-1}\Big(\big((a+x)\cdot_{E} (b+y)\big)\cdot_{E}\theta_{E}(c+z)\Big)\quad\text{(by Eq.~(\ref{formulas:preserve multiplication1}))}\\
    =&\varphi^{-1}\Big(\theta_{E}(a+x)\cdot_{E} \big((b+y)\cdot_{E} (c+z)\big)\Big)\\
    \quad&\text{(by $E$ being a Rota-Baxter family Hom-associative algebra})\\
    =&\varphi^{-1}\varphi\Big((a,x)\,\bar{\cdot}\, \big((b,y)\,\bar{\cdot}\, \bar{\theta}(c,z)\big)\Big)
       \quad\text{(by Eq.~(\ref{formulas:preserve multiplication1}))}\\
    =&\bar{\theta}(a,x)\,\bar{\cdot}\, \big((b,y)\,\bar{\cdot}\, (c,z)\big);\\
    &\bar{P}_{\alpha}(a,x) \,\bar{\cdot}\, \bar{P}_{\beta}(b,y)\\
    =&\varphi^{-1}\varphi\Big(\bar{P}_{\alpha}(a,x) \,\bar{\cdot}\, \bar{P}_{\beta}(b,y)\Big)\\
    =&\varphi^{-1}\Big(\varphi(\bar{P}_{\alpha}(a,x)) \,\cdot_{E}\, \varphi(\bar{P}_{\beta}(b,y))\Big)\quad\text{(by Eq.~(\ref{formulas:preserve multiplication1}))}\\
    =&\varphi^{-1}\Big(\bar{P}_{\alpha,\,E}(\varphi(a,x)) \,\cdot_{E}\, \bar{P}_{\beta,\,E}(\varphi(b,y))\Big)\quad\text{(by Eq.~(\ref{formulas:preserve multiplication2}))}\\
    =&\varphi^{-1}\Big(\bar{P}_{\alpha\beta,\,E}\Big(\bar{P}_{\alpha,\,E}(\varphi(a,x)) \,\cdot_{E}\, \varphi(b,y)+   \varphi(a,x)\,\cdot_{E}\, \bar{P}_{\beta,\,E}(\varphi(b,y))+\lambda\,\varphi(a,x) \,\cdot_{E}\, \varphi(b,y)\Big)\Big)\Big);\\
    \quad&\text{(by $E$ being a Rota-Baxter family Hom-associative algebra})\\
    =&\varphi^{-1}\Big(\bar{P}_{\alpha\beta,\,E}\Big(\varphi(\bar{P}_{\alpha}(a,x)) \,\cdot_{E}\, \varphi(b,y)+   \varphi(a,x)\,\cdot_{E}\, \varphi(\bar{P}_{\beta}(b,y))+\lambda\,\varphi(a,x) \,\cdot_{E}\, \varphi(b,y)\Big)\Big)\Big)\\
&\hspace{3cm}\text{(by Eq.~(\ref{formulas:preserve multiplication2}))}\\
    =&\varphi^{-1}\Big(\bar{P}_{\alpha\beta,\,E}\Big(\varphi\Big(\bar{P}_{\alpha}(a,x) \,\bar{\cdot}\, (b,y)+   (a,x)\,\bar{\cdot}\, \bar{P}_{\beta}(b,y)+\lambda\,(a,x) \,\bar{\cdot}\, (b,y)\Big)\Big)\Big)\quad\text{(by Eq.~(\ref{formulas:preserve multiplication1}))}\\
    =&\varphi^{-1}\varphi\Big(\bar{P}_{\alpha\beta}\Big(\bar{P}_{\alpha}(a,x) \,\bar{\cdot}\, (b,y) +   (a,x)\,\bar{\cdot}\, \bar{P}_{\beta}(b,y)+\lambda\,(a,x) \,\bar{\cdot}\, (b,y)\Big)\Big)\quad\text{(by Eq.~(\ref{formulas:preserve multiplication2}))}\\
    =&\bar{P}_{\alpha\beta}\Big(\bar{P}_{\alpha}(a,x) \,\bar{\cdot}\, (b,y) +   (a,x)\,\bar{\cdot}\, \bar{P}_{\beta}(b,y)+\lambda\,(a,x) \,\bar{\cdot}\, (b,y)\Big);\\
&\bar{\theta}\big(\bar{P}_{\alpha}(a,x)\big)\\
=&\varphi^{-1}\varphi\Big(\bar{\theta}\big(\bar{P}_{\alpha}(a,x)\big)\Big)\\
=&\varphi^{-1}\Big(\theta_{E}(P_{\alpha,\,E}(\varphi(a,x)))\Big)\quad\text{(by Eqs.~\eqref{formulas:preserve multiplication2}-\eqref{formulas:preserve multiplication3})}\\
=&\varphi^{-1}\Big(P_{\alpha,\,E}(\theta_{E}(\varphi(a,x)))\Big)\quad\text{(by $E$ being a Rota-Baxter family Hom-associative algebra})\\
=&\varphi^{-1}\varphi\Big(\bar{P}_{\alpha}(\bar{\theta}(a,x)\big)\Big)\quad\text{(by Eqs.~\eqref{formulas:preserve multiplication2}-\eqref{formulas:preserve multiplication3})}\\
&=\bar{P}_{\alpha}\big(\bar{\theta}(a,x)\big).
    \end{align*}
%


Finally, similar to ~\cite[Theorem 3.11]{ZW24}, we prove that the map $\varphi$ stabilizes $R$ and co-stabilizes $V$. Moreover, $\varphi$ is an isomorphism of Rota-Baxter family Hom-associative algebras by Eqs.~(\ref{formulas:preserve multiplication1})-(\ref{formulas:preserve multiplication3}).
\end{proof}

By the proof of Theorem ~\ref{thm:extension to unified}, we give the following remark.

\begin{rem}\label{rem:map unified to extension}
Let $\crr\subset \cee$ be an extension of Rota-Baxter family Hom-associative algebras.
\begin{enumerate}
\item\label{rem:map unified to extension1} Suppose that $\rho:E\rightarrow R$ is a linear retraction, then $V=\mathrm{ker}\,\rho$ is a space complement of $R$ in $E$. Conversely, suppose that $V$ is a space complement of $R$ in $E$, then we obtain a linear retraction $\rho:E\rightarrow R$ by defining $\rho(a+x)=a$, $a\in R,x\in V$, and it is called {\bf the retraction associated to $V$}~\cite{ZW24}.
\item\label{rem:map unified to extension2} Suppose that $\rho:E\rightarrow R$ is a linear retraction and $V=\mathrm{ker}\,\rho$, then we establishes a map as follows:
\begin{align*}
\Upsilon_{1}:\,\,\mathbf{Exts}(E,R)&\rightarrow \mathcal{O}(R,V)\\
\ee&\mapsto \Omega(R,V).
\end{align*}
\end{enumerate}
\end{rem}

In the sequel, we denote it by $\Omega^{\prime}(R,V)$ for convenience, the Rota-Baxter family Hom extending structure $\big(\triangleright^{\prime},\triangleleft^{\prime},\rightharpoonup^{\prime},
\leftharpoonup^{\prime},f^{\prime},\cdot_{V}^{\prime},(Q^{\prime}_{\omega})_{\omega\in S},(P^{\prime}_{\omega,\,V})_{\omega\in S},\eta^{\prime},\theta^{\prime}_{V}\big)$ of $\crr$ through $V$.

\begin{prop}\label{prop:bijection of extension and unified product}
The map $\Upsilon_{1}:\mathbf{Exts}(E,R)\rightarrow \mathcal{O}(R,V)$ defined in {\rm Remark ~\ref{rem:map unified to extension}~\ref{rem:map unified to extension2}} is bijective.
\end{prop}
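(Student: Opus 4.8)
The plan is to prove bijectivity by exhibiting an explicit two-sided inverse of $\Upsilon_{1}$. Throughout I fix the retraction $\rho:\cee\rightarrow\crr$ with $V=\mathrm{ker}\,\rho$, so that each $u\in E$ is uniquely $u=a+x$ with $a=\rho(u)\in R$, $x=u-\rho(u)\in V$, giving the canonical linear isomorphism $\varphi:\rnv\rightarrow\cee$, $\varphi(a,x)=a+x$. Given any $\Omega(R,V)\in\mathcal{O}(R,V)$, the unified product $\rnv$ is by definition a Rota-Baxter family Hom-associative algebra, and I would transport its structure along $\varphi$: equip $E$ with operations $\cdot_{E}$, $(P_{\omega,\,E})_{\omega\in S}$, $\theta_{E}$ determined by $\varphi\big((a,x)\,\bar{\cdot}\,(b,y)\big)=\varphi(a,x)\cdot_{E}\varphi(b,y)$, $\varphi\big(\bar{P}_{\omega}(a,x)\big)=P_{\omega,\,E}\big(\varphi(a,x)\big)$ and $\varphi\big(\bar{\theta}(a,x)\big)=\theta_{E}\big(\varphi(a,x)\big)$, exactly as in Eqs.~\eqref{formulas:preserve multiplication1}-\eqref{formulas:preserve multiplication3}. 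Since $\varphi$ is a linear isomorphism, this yields a Rota-Baxter family Hom-associative algebra on $E$, which I define to be $\Upsilon_{2}\big(\Omega(R,V)\big)$.

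Next I would check that $\Upsilon_{2}$ lands in $\mathbf{Exts}(E,R)$, i.e. that $\crr$ is a subalgebra. Using Eq.~\eqref{formulas:unified product} and bilinearity (resp. linearity) of the defining maps, one computes $(a,0)\,\bar{\cdot}\,(b,0)=(ab,0)$, $\bar{P}_{\omega}(a,0)=\big(P_{\omega}(a),0\big)$ and $\bar{\theta}(a,0)=\big(\theta(a),0\big)$, since all terms involving $f$, the four actions, $\cdot_{V}$, $Q_{\omega}$ and $\eta$ vanish on zero arguments. Transporting along $\varphi$, the subspace $R=\varphi\big(R\times\{0\}\big)$ is closed under $\cdot_{E}$, $(P_{\omega,\,E})_{\omega\in S}$ and $\theta_{E}$, with restricted operations equal to the original ones of $\crr$; hence $\crr\subset\Upsilon_{2}\big(\Omega(R,V)\big)$ is indeed an extension.

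Finally I would verify that $\Upsilon_{2}$ is a two-sided inverse of $\Upsilon_{1}$. For $\Upsilon_{2}\circ\Upsilon_{1}=\mathrm{Id}$: given an extension $\ee$, Theorem~\ref{thm:extension to unified} shows that the associated $\varphi$ is an \emph{isomorphism} of Rota-Baxter family Hom-associative algebras from $\rnv$ (built from $\Upsilon_{1}(\ee)$) onto $\ee$, so transporting $\rnv$ along $\varphi$ returns the original structure, giving $\Upsilon_{2}\big(\Upsilon_{1}(\ee)\big)=\ee$. For $\Upsilon_{1}\circ\Upsilon_{2}=\mathrm{Id}$, starting from $\Omega(R,V)$ and writing $\Omega^{\prime}(R,V)=\Upsilon_{1}\big(\Upsilon_{2}(\Omega(R,V))\big)$, I apply the recovery formulas~\eqref{formulas:extension to unified} with $\rho(a+x)=a$. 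For instance $(a,0)\,\bar{\cdot}\,(0,x)=(a\leftharpoonup x,\,a\triangleright x)$ yields $a\cdot_{E}x=(a\leftharpoonup x)+(a\triangleright x)$ with $R$-part $a\leftharpoonup x$, so $a\triangleright^{\prime}x=a\cdot_{E}x-\rho(a\cdot_{E}x)=a\triangleright x$ and $a\leftharpoonup^{\prime}x=\rho(a\cdot_{E}x)=a\leftharpoonup x$; the analogous evaluations of $(0,x)\,\bar{\cdot}\,(a,0)=(x\rightharpoonup a,\,x\triangleleft a)$, $(0,x)\,\bar{\cdot}\,(0,y)=(f(x,y),\,x\cdot_{V}y)$, $\bar{P}_{\omega}(0,x)=\big(Q_{\omega}(x),P_{\omega,\,V}(x)\big)$ and $\bar{\theta}(0,x)=\big(\eta(x),\theta_{V}(x)\big)$ recover $\triangleleft,\rightharpoonup,f,\cdot_{V},Q_{\omega},P_{\omega,\,V},\eta,\theta_{V}$ respectively, whence $\Omega^{\prime}(R,V)=\Omega(R,V)$.

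I expect the main obstacle to be purely organizational, living in the direction $\Upsilon_{1}\circ\Upsilon_{2}=\mathrm{Id}$: one must cleanly separate the $R$- and $V$-components of each of the products $(a,0)\,\bar{\cdot}\,(0,x)$, $(0,x)\,\bar{\cdot}\,(a,0)$, $(0,x)\,\bar{\cdot}\,(0,y)$ and of $\bar{P}_{\omega}(0,x)$, $\bar{\theta}(0,x)$, and then correctly identify which component is annihilated by $\rho$. The computation itself is routine, but it must be carried out consistently across all ten constituents of the extending datum.
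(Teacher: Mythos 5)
Your proposal is correct and takes essentially the same approach as the paper: the paper also proves bijectivity by exhibiting the explicit inverse $\Upsilon_{2}:\mathcal{O}(R,V)\rightarrow \mathbf{Exts}(E,R)$, whose defining formulas in Eq.~\eqref{formulas:unified to extension} are exactly the structure you obtain by transporting $\rnv$ along $\varphi(a,x)=a+x$, and then checking well-definedness together with $\Upsilon_{1}\circ\Upsilon_{2}=\mathrm{id}$ and $\Upsilon_{2}\circ\Upsilon_{1}=\mathrm{id}$. The only difference is presentational: the paper delegates these verifications to the analogous argument in \cite{ZW24}, whereas you carry out the subalgebra check and the component-by-component recovery of the ten constituents of $\Omega(R,V)$ explicitly, which is precisely what that argument amounts to.
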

\begin{proof}
Define a map
\begin{align*}
\Upsilon_{2}:\,\,\mathcal{O}(R,V)&\rightarrow \mathbf{Exts}(E,R)\\
\Omega(R,V)&\mapsto \ee,
\end{align*}
where the maps $\cdot_{E},P_{\omega,\,E}$ and $\theta_{E}$ are defined as follows with $a,b\in R$, $x,y\in V$ and $\omega\in S$:
\begin{align}\label{formulas:unified to extension}
\begin{split}
(a+x)\cdot_{E}(b+y)&:=ab+a\leftharpoonup y+x\rightharpoonup b+f(x,y)+a\triangleright y+x\triangleleft b+x\cdot_{V} y;\\
P_{\omega,\,E}(a,x)&:=P_{\omega}(a)+Q_{\omega}(x)+P_{\omega,\,V}(x);\\
\theta_{E}(a,x)&:=\theta(a)+\eta(x)+\theta_{V}(x).
\end{split}
\end{align}
Similar to ~\cite[Proposition 3.11]{ZW24}, we can prove that the map $\Upsilon_{2}$ is well defined, $\Upsilon_{1}\circ\Upsilon_{2}=\mathrm{id}_{\mathcal{O}(R,V)}$ and $\Upsilon_{2}\circ\Upsilon_{1}=\mathrm{id}_{\mathbf{Exts}(E,R)}$.
%
%
%
%
%
\end{proof}


\begin{defn}\label{defn:datum equivalent}
Let $\crr$ be a Rota-Baxter family Hom-associative algebra and $V$ a vector space. Two Rota-Baxter family Hom extending structures $\Omega(R,V)$ and $\Omega^{\prime}(R,V)$ are called {\bf equivalent}, and we denote it by $\Omega(R,V)\equiv\Omega^{\prime}(R,V)$, if there exists a pair of linear maps $(g,h)$, where $g:V\rightarrow R$ and $h\in \mathrm{Aut}_{{\bf k}}(V)$, satisfying the following conditions with $a\in R$, $x,y\in V$ and $\omega\in S$:
\begin{align*}
        (E1)\,\,&\text{The map $h$ is a $\crr$ bimodule morphism;}\\
        (E2)\,\,&a\leftharpoonup x+g(a\triangleright x)=a\,g(x)+a\leftharpoonup^{\prime}h(x);\\
        (E3)\,\,&x\rightharpoonup a+g(x\triangleleft a)=g(x)\,a+h(x)\rightharpoonup^{\prime} a ;\\
        (E4)\,\,&f(x,y)+g(x\cdot_{V} y)=g(x)\,g(y)+g(x)\leftharpoonup^{\prime} h(y)+h(x)\rightharpoonup^{\prime} g(y)+f^{\prime}\big(h(x),h(y)\big);\\
        (E5)\,\,&h(x\cdot_{V} y)=g(x)\triangleright^{\prime} h(y)+h(x)\triangleleft^{\prime} g(y)+h(x)\cdot_{V}^{\prime} h(y);\\
        (E6)\,\,&Q_{\omega}(x)+g\big(P_{\omega,\,V}(x)\big)=P_{\omega}\big(g(x)\big)+Q^{\prime}_{\omega}\big(h(x)\big);\\
        (E7)\,\,&\eta(x)+g\big(\theta_{V}(x)\big)=\theta\big(g(x)\big)+\eta^{\prime}\big(h(x)\big).
\end{align*}

Moreover, two equivalent Rota-Baxter family Hom extending structures are called cohomologous if $h=\mathrm{Id}_{V}$, and we denote it by $\Omega(R,V)\approx\Omega^{\prime}(R,V)$.
\end{defn}

Let $R$ be a Rota-Baxter family Hom-associative algebra and $V$ a vector space. Suppose that two Rota-Baxter family Hom extending structures $\Omega(R,V),\Omega^{\prime}(R,V)\in \mathcal{O}(R,V)$ and $\rnv,\rnvp$ are corresponding unified products. Similar to Definition~\ref{defn:equivalent cohomolous}, we define two relations $\equiv$ and $\approx$ between these two unified products $\rnv,\rnvp$ in the following Diagram ~(\ref{diagram:equivalent cohomologous uu}) and denoted by $\rnv\equiv \rnvp,\rnv\approx \rnvp$ respectively.
\begin{align}\label{diagram:equivalent cohomologous uu}
\begin{split}
\xymatrix{
  0 \ar[r]^{} & R \ar[d]_{\mathrm {Id}} \ar[r]^{i^{\prime}\,\,\,\,\,} & \rnv \ar[d]_{\phi} \ar[r]^{\,\,\,\,\,\pi^{\prime}} & V \ar[d]_{\mathrm {Id}} \ar[r]^{} & 0  \\
  0 \ar[r]^{} & R \ar[r]^{i^{\prime}\,\,\,\,\,} & \rnvp \ar[r]^{\,\,\,\,\,\pi^{\prime}} & V \ar[r]^{} & 0   }
\end{split}
\end{align}
where the maps $i^{\prime}$ and $\pi^{\prime}$ are defined in Diagram ~(\ref{diagram:equivalent cohomologous ue}).
\begin{lem}\label{lem:datum equivalent cohomologous}
Let $\crr$ be a Rota-Baxter family Hom-associative algebra and $V$ a vector space. Suppose that $\Omega(R,V)$ and $\Omega^{\prime}(R,V)$ are two Rota-Baxter family Hom extending structures of $\crr$ through $V$ and $\rnv$, $\rnvp$ are the corresponding unified products. We denote by $\mathcal{M}$ the set of all morphisms of Rota-Baxter family Hom-associative algebras $\phi:\rnv\rightarrow \rnvp$ which stabilizes $R$ and denote by $\mathcal{N}$ the set of $(g,h)$, where the maps $g:V\rightarrow R$ and $h:V\rightarrow V$ are both linear maps satisfying {\rm Eqs. (E1)-(E7)} in {\rm Definition ~\ref{defn:datum equivalent}}. Then
\begin{enumerate}
\item there exists a bijection
\begin{align*}
\Upsilon_{4}:\mathcal{N}&\rightarrow \mathcal{M}\\
(g,h)&\mapsto \phi
\end{align*}
where $\phi(a,x)=\big(a+g(x),h(x)\big)$, for all $a \in R,x \in V$.
\item under the bijection, the map $\phi$ is bijective if and only if the map $h$ is bijective and the map $\phi$ co-stabilizes $V$ if and only if $h=\mathrm{Id}_{V}$.
\end{enumerate}
\end{lem}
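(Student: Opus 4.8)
The plan is to first determine the shape forced on an $R$-stabilizing linear map $\phi:\rnv\to\rnvp$, then translate the three defining axioms of a Rota-Baxter family Hom-associative algebra morphism into the system (E1)--(E7), and finally read the bijectivity and co-stabilization criteria of part (b) directly off the explicit form of $\phi$.

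First I would note that, as a vector space, $\rnv=R\times V$ with distinguished subspace $\{(a,0)\mid a\in R\}$, and that commutativity of the left square of Diagram~\eqref{diagram:equivalent cohomologous uu} holds exactly when $\phi(a,0)=(a,0)$ for all $a\in R$. Writing $\phi(0,x)=\big(g(x),h(x)\big)$ for linear maps $g:V\to R$ and $h:V\to V$ and using linearity, every $R$-stabilizing linear map is forced into the form $\phi(a,x)=\big(a+g(x),h(x)\big)$, and conversely any such pair $(g,h)$ of linear maps defines an $R$-stabilizing linear map. This already yields a bijection between $R$-stabilizing linear maps $\rnv\to\rnvp$ and pairs $(g,h)$ of linear maps, with no conditions imposed yet; the content of part (a) is that $\phi$ respects the Rota-Baxter family Hom-associative structure precisely when $(g,h)$ satisfies Eqs. (E1)--(E7).

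Second I would impose the three morphism axioms $\phi\big((a,x)\,\bar{\cdot}\,(b,y)\big)=\phi(a,x)\,\bar{\cdot}'\,\phi(b,y)$, $\phi\big(\bar{P}_{\omega}(a,x)\big)=\bar{P}'_{\omega}\big(\phi(a,x)\big)$ and $\phi\big(\bar{\theta}(a,x)\big)=\bar{\theta}'\big(\phi(a,x)\big)$, and, exactly as in the proof of Theorem~\ref{thm:datum and unified product}, test them on the generating set $\{(a,0)\}\cup\{(0,x)\}$. Expanding both sides with the formulas in Eq.~\eqref{formulas:unified product} and the form $\phi(a,x)=\big(a+g(x),h(x)\big)$, each axiom splits into an $R$-component and a $V$-component. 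The inputs $(a,0),(b,0)$ and the single input $(a,0)$ produce only identities already guaranteed by $R$ being a subalgebra of both products; the mixed and the purely-$V$ inputs yield, in their $R$-components, precisely (E2)--(E4) and (E6)--(E7), while their $V$-components produce $h(a\triangleright y)=a\triangleright' h(y)$, $h(x\triangleleft b)=h(x)\triangleleft' b$, $h\big(P_{\omega,\,V}(x)\big)=P'_{\omega,\,V}\big(h(x)\big)$ and $h\big(\theta_{V}(x)\big)=\theta'_{V}\big(h(x)\big)$, which assemble into (E1), together with (E5) coming from the $V$-component of the $(0,x),(0,y)$ multiplication. Here (E1) is meaningful because, by (R1) of Theorem~\ref{thm:datum and unified product}, $V$ carries an $R$-bimodule structure from each of $\Omega(R,V)$ and $\Omega^{\prime}(R,V)$. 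Thus $\phi\in\mathcal{M}$ if and only if $(g,h)\in\mathcal{N}$, so $\Upsilon_{4}$ is the claimed bijection; its inverse simply extracts $g$ and $h$ from $\phi(0,x)=\big(g(x),h(x)\big)$.

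Finally, for part (b) I would argue directly from $\phi(a,x)=\big(a+g(x),h(x)\big)$. For injectivity, $\phi(a,x)=0$ gives $h(x)=0$ and $a+g(x)=0$, so $h$ injective forces $x=0$ and then $a=0$; conversely $h(x)=0$ implies $\phi\big(-g(x),x\big)=0$, so injectivity of $\phi$ forces $x=0$. For surjectivity, given $(b,y)$ one solves $h(x)=y$ and sets $a=b-g(x)$, while conversely the $V$-component of $\phi$ sweeps out all of $V$ through $h$; hence $\phi$ is bijective if and only if $h$ is bijective, with explicit inverse $\phi^{-1}(a,x)=\big(a-g(h^{-1}(x)),h^{-1}(x)\big)$ when $h$ is invertible. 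For co-stabilization, commutativity of the right square of Diagram~\eqref{diagram:equivalent cohomologous uu} means $\pi^{\prime}\big(\phi(a,x)\big)=\pi^{\prime}(a,x)=x$; since $\pi^{\prime}\big(\phi(a,x)\big)=\pi^{\prime}\big(a+g(x),h(x)\big)=h(x)$, this holds for all $(a,x)$ exactly when $h=\mathrm{Id}_{V}$. The main obstacle is the middle step: the bookkeeping required to expand the three morphism axioms over all generator combinations and to verify line by line that the resulting component equations match (E1)--(E7), checking in particular that every $V$-component produced is accounted for by the bimodule-morphism identities comprising (E1) together with (E5), with nothing left over. This is routine but lengthy and is the exact analogue in the present setting of the verification in \cite[Theorem~3.11]{ZW24} and \cite[Proposition~3.11]{ZW24}.
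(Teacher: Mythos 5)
Your proposal is correct and takes essentially the same approach as the paper: the paper's proof likewise reduces everything (delegating the structural parts to \cite[Lemma 3.13]{ZW24}) to the fact that an $R$-stabilizing linear map must have the form $\phi(a,x)=\big(a+g(x),h(x)\big)$, and then checks that the three morphism identities, evaluated on the generators $(a,0)$ and $(0,x)$, split componentwise into exactly (E1)--(E7), with part (b) read off directly from the explicit form of $\phi$. Your write-up simply fills in the componentwise bookkeeping that the paper compresses into ``by directly computing.''
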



\begin{proof}
Analogous to ~\cite[Lemma 3.13]{ZW24}, here we only need to prove that Eqs. (E1)-(E7) hold if and only if
the map $\phi$ is a morphism of Rota-Baxter family Hom-associative algebras,
i.e., for all $a,b\in R$, $x,y\in V$ and $\omega\in S$, the following identities hold.
\begin{align}
   \phi\big((a,x)\,\bar{\cdot}\, (b,y)\big)&=\phi(a,x)\,\bar{\cdot}\,^{\prime} \phi(b,y);\label{zformula(not):algebra morphism M}\\
    \phi\big(\bar{P}_{\omega}(a,x)\big)&=\bar{P}^{\prime}_{\omega}\big(\phi(a,x)\big);\label{zformula(not):algebra morphism RB}\\
    \phi\big(\bar{\theta}(a,x)\big)&=\bar{\theta}^{\prime}\big(\phi(a,x)\big).\label{zformula(not):algebra morphism T}
\end{align}
By directly computing, Eqs. ~\eqref{zformula(not):algebra morphism M}-\eqref{zformula(not):algebra morphism T} hold if and only if Eqs. (E1)-(E7) hold.
\end{proof}


By Lemma ~\ref{lem:datum equivalent cohomologous}, we know that $\Omega(R,V)\equiv \Omega^{\prime}(R,V)$ (resp. $\Omega(R,V)\approx \Omega^{\prime}(R,V)$) if and only if $\rnv\equiv \rnvp$ (resp. $\rnv\approx \rnvp$) in {\rm Diagram ~(\ref{diagram:equivalent cohomologous uu})}. It's obvious that the relation $\equiv$ and $\approx$ are both equivalence relations. Suppose that
$$H^{2}(V,R):=\mathcal{O}(R,V)/\equiv,\quad H_{V}^{2}(V,R):=\mathcal{O}(R,V)/\approx.$$
Now we arrive at our main result of classifying the Rota-Baxter family Hom extending structures.

\begin{thm}\label{thm:classification of extension}
Let $\crr$ be a Rota-Baxter family Hom-associative algebra. Suppose that $E$ is a vector space containing $R$ as a subspace, $\rho:E\rightarrow R$ is a linear retraction and $V=\mathrm{ker}\,\rho$. Then
\begin{enumerate}
\item the map $\Upsilon_{1}$ defined in {\rm Remark ~\ref{rem:map unified to extension}~\ref{rem:map unified to extension2}} induces a bijection of equivalence classes via $\equiv$:
\begin{align*}
 \Upsilon:\mathbf{Extd}(E,R)&\rightarrow H^{2}(V,R);\\
 [\ee]&\mapsto [\Omega(R,V)].
 \end{align*}
 \item the map $\Upsilon_{1}$ induces a bijection of equivalence classes via $\approx$:
\begin{align*}
 \Upsilon^{\prime}:\mathbf{Extd^{\prime}}(E,R)&\rightarrow H_{V}^{2}(V,R);\\
 \overline{\ee}&\mapsto \overline{\Omega(R,V)}.
\end{align*}
\end{enumerate}
\end{thm}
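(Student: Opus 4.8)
The plan is to leverage the machinery already assembled in the preceding results, so that Theorem~\ref{thm:classification of extension} becomes essentially a bookkeeping exercise in transporting equivalence relations across bijections. The backbone is Proposition~\ref{prop:bijection of extension and unified product}, which gives a bijection $\Upsilon_{1}:\mathbf{Exts}(E,R)\rightarrow \mathcal{O}(R,V)$, together with the abstract set-theoretic transport principle recorded in Proposition~\ref{prop:induce map}. The strategy for both parts is identical: show that $\Upsilon_{1}$ is compatible with the respective equivalence relations ($\equiv$ on the source and $\equiv$ on the target for part (a); $\approx$ for part (b)), and then invoke Proposition~\ref{prop:induce map}\ref{prop set item 2} to descend $\Upsilon_{1}$ to a bijection of quotient sets.

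For part (a), I would first recall that by the diagrams \eqref{diagram:equivalent cohomologous ue} and \eqref{diagram:equivalent cohomologous uu} and the definitions of $\equiv$, two extensions $\ee\equiv\eep$ precisely when there is an isomorphism stabilizing $R$. The key claim to verify is that, under $\Upsilon_{1}$, one has $\ee\equiv\eep$ in $\mathbf{Exts}(E,R)$ if and only if $\Omega(R,V)\equiv\Omega^{\prime}(R,V)$ in $\mathcal{O}(R,V)$. The forward and backward implications both follow by chaining the three equivalences we already have: Theorem~\ref{thm:extension to unified} and Proposition~\ref{prop:bijection of extension and unified product} give $\rnv\approx\ee$ (hence also $\rnv\equiv\ee$) for the unified product attached to each extension via $\Upsilon_{1}$, while Lemma~\ref{lem:datum equivalent cohomologous} supplies the crucial translation $\Omega(R,V)\equiv\Omega^{\prime}(R,V)\iff \rnv\equiv\rnvp$. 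Composing the isomorphism $\varphi$ that realizes $\rnv\equiv\ee$, the hypothesized isomorphism $\psi:\ee\rightarrow\eep$, and the inverse $\varphi'^{-1}$ realizing $\rnvp\equiv\eep$, one obtains an $R$-stabilizing isomorphism $\rnv\rightarrow\rnvp$, and conversely. This shows $\Upsilon_{1}$ respects $\equiv$ in the sense required by Proposition~\ref{prop:induce map}, so the induced map $\Upsilon:\mathbf{Extd}(E,R)\rightarrow H^{2}(V,R)$ is a well-defined bijection sending $[\ee]\mapsto[\Omega(R,V)]$.

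Part (b) is the same argument with $\equiv$ replaced throughout by $\approx$: one uses the second assertion of Lemma~\ref{lem:datum equivalent cohomologous}, namely that the morphism $\phi$ co-stabilizes $V$ if and only if $h=\mathrm{Id}_{V}$, to see that the cohomologous relation on $\mathcal{O}(R,V)$ matches the cohomologous relation on $\mathbf{Exts}(E,R)$ under $\Upsilon_{1}$. The composite isomorphism constructed above co-stabilizes $V$ exactly when all three constituent maps do, and since $\varphi$ (from Theorem~\ref{thm:extension to unified}) already co-stabilizes $V$, the co-stabilization of the composite is governed entirely by that of $\psi$. Thus $\ee\approx\eep \iff \Omega(R,V)\approx\Omega^{\prime}(R,V)$, and Proposition~\ref{prop:induce map}\ref{prop set item 2} again yields the bijection $\Upsilon^{\prime}:\mathbf{Extd^{\prime}}(E,R)\rightarrow H_{V}^{2}(V,R)$.

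The step I expect to be the main obstacle is verifying carefully that the \emph{composite} of the three maps genuinely stabilizes $R$ (respectively co-stabilizes $V$), since this requires tracking the commutativity of the left and right squares in the stacked diagrams \eqref{diagram:equivalent cohomologous ee}, \eqref{diagram:equivalent cohomologous ue}, and \eqref{diagram:equivalent cohomologous uu} simultaneously. Concretely, one must confirm that the identity maps on $R$ and $V$ appearing in each diagram are compatible so that the pasted diagram still commutes, which is where a sign error or a mismatched inclusion/projection would most easily creep in. Everything else is a formal consequence of the bijections $\Upsilon_{1}$, $\Upsilon_{4}$ and the transport lemma, so I would state the compatibility claim precisely, prove the two implications by explicit composition of isomorphisms, and then cite Proposition~\ref{prop:induce map} to finish.
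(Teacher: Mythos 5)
Your proposal is correct and follows essentially the same route as the paper: the paper's own proof is the one-line citation of Proposition~\ref{prop:induce map}, relying implicitly on Proposition~\ref{prop:bijection of extension and unified product}, Theorem~\ref{thm:extension to unified} (which gives $\rnv\approx\ee$), and Lemma~\ref{lem:datum equivalent cohomologous} to identify the two equivalence relations under $\Upsilon_{1}$. Your write-up simply makes explicit the compatibility check (chaining $\varphi$, $\psi$, $\varphi'^{-1}$ and tracking stabilization/co-stabilization) that the paper leaves to the reader, which is exactly the intended argument.
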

\begin{proof}
It's a direct result of Proposition ~\ref{prop:induce map}.
\end{proof}

\subsection{Flag extending structures for Rota-Baxter family Hom-associative algebras}
In this subsection, we define flag datums as a special case of Rota-Baxter family Hom extending structures and give an example of the ES problem.
\begin{defn}
Let $\crr$ be a Rota-Baxter family Hom-associative algebra, $E$ a vector space containing $R$ as a subspace. A Rota-Baxter family Hom-associative algebra structure can be defined on $E$ is called a {\bf flag extending structure} of $\crr$ to $\cee$ if there exists a finite chain of subalgebras:
$$\crr=\cee_{0}\subset \cee_{1}\subset\cdots\subset \cee_{n}=\cee$$
such that $E_{i}$ has codimension 1 in $E_{i+1}$, for $0\leq i \leq n-1$.
\end{defn}

Suppose that $R$ has finite codimension in $E$ and $V$ is a space complement of $\crr$ in $E$ with basis $\{x_{1},x_{2},\cdots,x_{n}\}$, then the flag extending structure of Rota-Baxter family Hom-associative algebra $R$ to $E$ can be defined recursively. In fact, we can first define all extensions on $E_{1}:=R+{\bf k}\{x_{1}\}$ through Rota-Baxter family Hom extending structures $\Omega(R,{\bf k}\{x_{1}\})$. Second, we can define all extensions on $E_{2}:=E_{1}+{\bf k}\{x_{2}\}$ through Rota-Baxter family Hom extending structures $\Omega(E_{1},{\bf k}\{x_{2}\}),\cdots$, by finite steps, we can define all extensions on $E_{n}:=E_{n-1}+{\bf k}\{x_{n}\}$. Since each step is similar to the first one, we mainly consider the Rota-Baxter family Hom extending structure of $R$ through a 1-dimensional vector space $V$.
\begin{defn}
Let $\crr$ be a Rota-Baxter family Hom-associative algebra. Suppose that $l,r:R\rightarrow {\bf k}$ and $t_{r},t_{l}:R\rightarrow R$ are linear maps, $a_{1},(b_{\omega})_{\omega\in S},a_{2}\in R$ and $\bar{k}_{1},(\bar{k}_{\omega})_{\omega\in S},\bar{k}_{2}\in {\bf k}$. The 10-tuple $\Omega(\crr)=\big(l,r,t_{r},t_{l},a_{1},\bar{k}_{1},(b_{\omega})_{\omega\in S},(\bar{k}_{\omega})_{\omega\in S},a_{2},\bar{k}_{2}\big)$ is called a {\bf flag datum} of $\crr$ if the following conditions hold for all $a,b\in R$:
\begin{align*}
        (F1)\,\,\,\,&l(ab)\,\bar{k}_{2}=l\big(\theta(a)\big)\,l(b);\quad
        (F2)\,\,\,\,r(a)\,r\big(\theta(b)\big)=r(ab)\,\bar{k}_{2};\quad
        (F3)\,\,\,\,l(a)\,r(\theta(b))=l\big(\theta(a)\big)\,r(b);\\
        (F4)\,\,\,\,&l\big(P_{\alpha}(a)\big)\,\bar{k}_{\beta}=\bar{k}_{\alpha\beta}\,\Big(l\big(P_{\alpha}(a)\big)+l(a)\,\bar{k}_{\beta}+\lambda\, l(a)\Big);\\
        (F5)\,\,\,\,&\bar{k}_{\alpha}\,r\big(P_{\beta}(a)\big)=\bar{k}_{\alpha\beta}\,\Big(\bar{k}_{\alpha}r(a)+r\big(P_{\beta}(a)\big)+\lambda\, r(a)\Big);\\
        (F6)\,\,\,\,&(ab)\,a_{2}+t_{l}(ab)\,\bar{k}_{2}=\theta(a)\,t_{l}(b)+t_{l}\big(\theta(a)\big)\,l(b);\\
        (F7)\,\,\,\,&t_{r}(a)\,\theta(b)+r(a)\,t_{r}\big(\theta(b)\big)=a_{2}\,(ab)+\bar{k}_{2}\,t_{r}(ab);\\
        (F8)\,\,\,\,&t_{l}(a)\,\theta(b)+l(a)\,t_{r}\big(\theta(b)\big)=\theta(a)\,t_{r}(b)+t_{l}\big(\theta(a)\big)\,r(b);\\
        (F9)\,\,\,\,&a_{1}\,\theta(a)+\bar{k}_{1}\,t_{r}\big(\theta(a)\big)=a_{2}\,t_{r}(a)+t_{l}(a_{2})\,r(a)+\bar{k}_{2}\,t_{r}\big(t_{r}(a)\big)+a_{1}\,\bar{k}_{2}\,r(a);\\
        (F10)\,\,&\bar{k}_{1}\,r\big(\theta(a)\big)=l(a_{2})\,r(a)+\bar{k}_{2}\,r\big(t_{r}(a)\big)+\bar{k}_{1}\,\bar{k}_{2}\,r(a);\\
        (F11)\,\,&t_{l}(a)\,a_{2}+t_{l}\big(t_{l}(a)\big)\,\bar{k}_{2}+l(a)\,t_{r}(a_{2})+l(a)\,\bar{k}_{2}\,a_{1}= \theta(a)\,a_{1}+ t_{l}\big(\theta(a)\big)\,\bar{k}_{1};\\
        (F12)\,\,&l\big(t_{l}(a)\big)\,\bar{k}_{2}+l(a)\,r(a_{2})+l(a)\,\bar{k}_{1}\,\bar{k}_{2}= l\big(\theta(a)\big)\,\bar{k}_{1} ;\\
        (F13)\,\,&t_{r}(a)\,a_{2}+t_{l}\big(t_{r}(a)\big)\,\bar{k}_{2}+r(a)\,t_{r}(a_{2})+r(a)\,\bar{k}_{2}\,a_{1}=a_{2}\,t_{l}(a)+t_{l}(a_{2})\,l(a)+\bar{k}_{2}\,t_{r}\big(t_{l}(a)\big)+\bar{k}_{2}\,l(a)\,a_{1};\\
        (F14)\,\,&l\big(t_{r}(a)\big)\,\bar{k}_{2}+r(a)\,r(a_{2})+r(a)\,\bar{k}_{1}\,\bar{k}_{2}=l(a_{2})\,l(a)+\bar{k}_{2}\,r\big(t_{l}(a)\big)+\bar{k}_{2}\,\bar{k}_{1}\,l(a);\\
        (F15)\,\,&a_{1}\,a_{2}+t_{l}(a_{1})\,\bar{k}_{2}+\bar{k}_{1}\,t_{r}(a_{2})=a_{2}\,a_{1}+\bar{k}_{1}\,t_{l}(a_{2})+\bar{k}_{2}\,t_{r}(a_{1});\\
        (F16)\,\,&l(a_{1})\,\bar{k}_{2}+\bar{k}_{1}\,r(a_{2})=l(a_{2})\,\bar{k}_{1}+\bar{k}_{2}\,r(a_{1});\\
        (F17)\,\,&P_{\alpha}(a)\,b_{\beta}+t_{l}\big(P_{\alpha}(a)\big)\,\bar{k}_{\beta}=P_{\alpha\beta}\Big(t_{l}\big(P_{\alpha}(a)\big)+a\,b_{\beta}+t_{l}(a)\,\bar{k}_{\beta}+\lambda\, t_{l}(a)\Big)+b_{\alpha\beta}\,\Big(l\big(P_{\alpha}(a)\big)\\
        &+l(a)\,\bar{k}_{\beta}+\lambda\,l(a)\Big);\\
        (F18)\,\,&b_{\alpha}\,P_{\beta}(a)+\bar{k}_{\alpha}\,t_{r}\big( P_{\beta}(a)\big)=P_{\alpha\beta}\Big(b_{\alpha}\,a+\bar{k}_{\alpha}\,t_{r}(a)+t_{r}\big(P_{\beta}(a)\big)+\lambda\, t_{r}(a)\Big)+b_{\alpha\beta}\,\Big(\bar{k}_{\alpha}\,r(a)\\
        &+r\big(P_{\beta}(a)\big)+\lambda\,r(a)\Big);\\
        (F19)\,\,&b_{\alpha}\,b_{\beta}+t_{l}(b_{\alpha})\,\bar{k}_{\beta}+\bar{k}_{\alpha}\,t_{r}(b_{\beta})+a_{1}\,\bar{k}_{\alpha}\,\bar{k}_{\beta}= P_{\alpha\beta}\big(t_{l}(b_{\alpha})+\bar{k}_{\alpha}\,a_{1}+t_{r}(b_{\beta})+\bar{k}_{\beta}\,a_{1}+\lambda\,a_{1}\big)\\ &+b_{\alpha\beta}\,\big(l(b_{\alpha})+\bar{k}_{\alpha}\,\bar{k}_{1}+r(b_{\beta})+\bar{k}_{1}\,\bar{k}_{\beta}+\lambda\,\bar{k}_{1}\big);\\
        (F20)\,\,&l(b_{\alpha})\,\bar{k}_{\beta}+\bar{k}_{\alpha}\,r(b_{\beta})+\bar{k}_{1}\,\bar{k}_{\alpha}\,\bar{k}_{\beta}= \bar{k}_{\alpha\beta}\,\big(l(b_{\alpha})+\bar{k}_{1}\,\bar{k}_{\alpha}+r(b_{\beta})+\bar{k}_{1}\,\bar{k}_{\beta}+\lambda\,\bar{k}_{1}\big);\\
        (F21)\,\,&\theta(b_{\alpha})+\bar{k}_{\alpha}\,a_{2}=P_{\alpha}(a_{2})+\bar{k}_{2}\,b_{\alpha}.
    \end{align*}
\end{defn}

Denote by $\mathcal{F}(\crr)$ the set of all flag datums of $\crr$, then we have the following theorem.

\begin{thm}\label{thm:flag to extending}
Let $\crr$ be a Rota-Baxter family Hom-associative algebra of codimension $1$ in $E$. Suppose that $V$ is a space complement of $R$ in $E$ with basis $\{x\}$. Then there exists a bijection:
\begin{align*}
 \Phi:\mathcal{F}(\crr)&\rightarrow \mathcal{O}(\crr,V)\\
 \Omega(R)&\mapsto \Omega(R,V),
 \end{align*}
where the Rota-Baxter family Hom extending structure $\Omega(R,V)$ is defined as follows with $a\in R$:
\begin{align}
\label{formulas:flag and extending structure}
\begin{aligned}
        a\triangleright x&=l(a)\,x;\\
        f(x,x)&=a_{1};\\
        \eta(x)&=a_{2};
\end{aligned}\quad
\begin{aligned}
        x\triangleleft a&=r(a)\,x;\\
        x\cdot_{V} x&=\bar{k}_{1}\,x;\\
        \theta_{V}(x)&=\bar{k}_{2}\,x.
\end{aligned}\quad
\begin{aligned}
        a\leftharpoonup x&=t_{l}(a);\\
        Q_{\omega}(x)&=b_{\omega};\\
        &
\end{aligned}\quad
\begin{aligned}
        x\rightharpoonup a&=t_{r}(a);\\
        P_{\omega,\,V}(x)&=\bar{k}_{\omega}\,x;\\
        &
\end{aligned}
\end{align}
\end{thm}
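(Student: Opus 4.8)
The plan is to exploit that $V$ is one-dimensional in order to turn every structure map of an extending datum into a single scalar or element of $R$, and then to check that, under this dictionary, the conditions (R1)--(R17) of Theorem~\ref{thm:datum and unified product} collapse exactly onto the flag conditions (F1)--(F21). I would begin by noting that $\Phi$ is already a bijection at the level of raw data, before imposing any axioms. Since $V=\bfk\{x\}$, a bilinear map $\triangleright\colon R\times V\to V$ is completely determined by $a\triangleright x\in V$, i.e.\ by the linear functional $l\colon R\to\bfk$ with $a\triangleright x=l(a)\,x$; likewise $\triangleleft$ is encoded by $r$, while the $R$-valued maps $\leftharpoonup$ and $\rightharpoonup$ are encoded by linear maps $t_l,t_r\colon R\to R$ through $a\leftharpoonup x=t_l(a)$ and $x\rightharpoonup a=t_r(a)$. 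The remaining data $f(x,x)=a_1$, $x\cdot_V x=\bar k_1 x$, $Q_\omega(x)=b_\omega$, $P_{\omega,V}(x)=\bar k_\omega x$, $\eta(x)=a_2$ and $\theta_V(x)=\bar k_2 x$ are single scalars or elements of $R$. Reading off these coordinates is inverse to the formulas in~\eqref{formulas:flag and extending structure}, so $\Omega(R)\mapsto\Omega(R,V)$ is a bijection between the set of all such $10$-tuples and the set of all extending data of $R$ through $V$.

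The heart of the argument, and its only real labor, is to substitute the formulas~\eqref{formulas:flag and extending structure} into each condition of Theorem~\ref{thm:datum and unified product}, evaluate on the basis vector $x$ (which suffices, everything being linear and $V$ being one-dimensional), and pull scalars through by bilinearity. Each condition is valued in a single component: the $R$-valued ones stay equations in $R$, while the $V$-valued ones become scalar equations after cancelling the common factor $x$. Carrying this out, the bimodule axiom (R1), i.e.\ Eqs.~\eqref{formulas:left module definition1}--\eqref{formulas:bimodule definition}, yields (F1)--(F5), with one of its six component equations, namely Eq.~\eqref{formulas:left module definition3}, reducing to the automatic identity $\bar k_\omega\bar k_2 x=\bar k_2\bar k_\omega x$ (which is why only five flag conditions arise from (R1)); and then (R2),(R3),\dots,(R17) translate one-for-one into (F6),(F7),\dots,(F21) respectively. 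For example, expanding (R2) gives $(ab)\,a_2+\bar k_2\,t_l(ab)=\theta(a)\,t_l(b)+l(b)\,t_l(\theta(a))$, which is precisely (F6), and an entirely analogous reduction disposes of every remaining case.

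Finally I would assemble the conclusion: by the raw bijection of the first paragraph, a $10$-tuple $\Omega(R)$ satisfies (F1)--(F21) if and only if the associated extending datum $\Omega(R,V)$ satisfies (R1)--(R17), which by Theorem~\ref{thm:datum and unified product} holds if and only if $\Omega(R,V)$ is a Rota-Baxter family Hom extending structure, i.e.\ lies in $\mathcal{O}(R,V)$. Hence $\Phi$ restricts to a bijection $\mathcal{F}(R)\to\mathcal{O}(R,V)$, as claimed. The sole obstacle is the length of the middle step: the translation is a mechanical but error-prone verification of seventeen identities, so the real care lies in the bookkeeping of where the scalars $l(a),r(a),\bar k_\omega,\bar k_2$ land rather than in any conceptual difficulty.
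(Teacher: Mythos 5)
Your proposal is correct and follows essentially the same route as the paper: the paper's proof consists precisely of the direct check that, under the dictionary~\eqref{formulas:flag and extending structure} forced by $\dim V=1$, conditions (F1)--(F21) are equivalent to (R1)--(R17), so that the raw bijection on data restricts to a bijection $\mathcal{F}(\crr)\to\mathcal{O}(\crr,V)$. Your additional observations (that evaluating multilinear identities on the basis vector $x$ suffices, and that the sixth component of (R1), namely Eq.~\eqref{formulas:left module definition3}, becomes the automatic identity $\bar{k}_{\alpha}\bar{k}_{2}x=\bar{k}_{2}\bar{k}_{\alpha}x$, which is why (R1) produces only (F1)--(F5)) are accurate refinements of the bookkeeping the paper leaves implicit.
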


\begin{proof}
We can directly check that Eqs. (F1)-(F21) of the flag datum are equivalent to Eqs. (R1)-(R17) of the Rota-Baxter family Hom extending structure, hence $\Phi$ is a bijection.
\end{proof}

For better understanding, now we give an example to compute $H^{2}(V,\crr)$ and $H_{V}^{2}(V,\crr)$. Since the classification depends on the choice of field {\bf k}, we select {\bf k} the the complex number field in the following example.

\begin{exam}\label{exam:ES problem}
\begin{table}[htbp]
  \centering
  \renewcommand\arraystretch{1.1}
\begin{tabular}{ccccc}
\toprule
  \multirow{2}*{}&flag datums $(\bar{l},\bar{r},\bar{t}_{r},\bar{t}_{l},\bar{a}_{1},$
   & \multirow{2}*{$\bar{h}$} & \multirow{2}*{$\bar{g}$} & equivalent or \\
      ~&$\bar{k}_{1}, (\bar{b}_{e},\bar{b}_{\sigma}),(\bar{k}_{e},\bar{k}_{\sigma}),\bar{a}_{2},
\bar{k}_{2})$ & ~& ~ & cohomologous class\\

\midrule
  \multirow{2}*{1}&$\big(0,0,\bar{t}_{r},\bar{t}_{r},\bar{t}^{2}_{r},0,(-\bar{t}_{r}\bar{k}_{e},
  -\bar{t}_{r}\bar{k}_{\sigma}),$
   & \multirow{2}*{1} & \multirow{2}*{$-\bar{t}_{r}$} & \multirow{2}*{$\big(0_{6},(0_{2}),(\bar{k}_{e},\bar{k}_{\sigma}),0,
  \bar{k}_{2}\big)$} \\
      ~&$(\bar{k}_{e},\bar{k}_{\sigma}),
  (1-\bar{k}_{2})\bar{t}_{r},\bar{k}_{2}\big)$ & ~& ~ & ~\\

      \hline
   \multirow{2}*{2}& $\big(0,0,\bar{t}_{r},\bar{t}_{r},(\bar{t}_{r}-\bar{k}_{1})\bar{t}_{r},\bar{k}_{1},(0_{2})_{2},$ & $\frac{1}{\bar{k}_{1}}$ ~& $-\frac{1}{\bar{k}_{1}}\bar{t}_{r}$ & $\big(0_{5},1,(0_{2})_{2},0,\bar{k}_{2}\big)$  \\
      ~&$(1-\bar{k}_{2})\bar{t}_{r},\bar{k}_{2}\big)$, $\bar{k}_{1}\neq 0$ & $1$ & $-\bar{t}_{r}$ & $\big(0_{5},\bar{k}_{1},(0_{2})_{2},0,\bar{k}_{2}\big)$\\
    \hline

   \multirow{2}*{3}& $\big(0,0,\bar{t}_{r},\bar{t}_{r},(\bar{t}_{r}-\bar{k}_{1})\bar{t}_{r},\bar{k}_{1},(\lambda \bar{k}_{1},\pm \lambda \bar{k}_{1}),$ & $\frac{1}{\bar{k}_{1}}$ ~& $-\frac{1}{\bar{k}_{1}}\bar{t}_{r}$ & $\big(0_{5},1,(\lambda,\pm \lambda),(0_{2}),0,1\big)$  \\
      ~&$(0_{2}),0,1\big)$, $\bar{k}_{1}\neq 0$ & $1$ & $-\bar{t}_{r}$ & $\big(0_{5},\bar{k}_{1},(\lambda \bar{k}_{1},\pm \lambda \bar{k}_{1}),(0_{2}),0,1\big)$\\

  \hline
    \multirow{2}*{4}& $\big(0,0,\bar{t}_{r},\bar{t}_{r},(\bar{t}_{r}-\bar{k}_{1})\bar{t}_{r},\bar{k}_{1},(\lambda \bar{t}_{r},\lambda \bar{t}_{r}),$ & $\frac{1}{\bar{k}_{1}}$ ~& $-\frac{1}{\bar{k}_{1}}\bar{t}_{r}$ & $\big(0_{5},1,(0_{2}),(-\lambda,-\lambda),0,\bar{k}_{2}\big)$  \\
      ~&$(-\lambda,-\lambda),(1-\bar{k}_{2})\bar{t}_{r},\bar{k}_{2}\big)$, $\bar{k}_{1}\neq 0$ & $1$ & $-\bar{t}_{r}$ & $\big(0_{5},\bar{k}_{1},(0_{2}),(-\lambda,-\lambda),0,\bar{k}_{2}\big)$\\

\hline
    \multirow{2}*{5}&$\big(0,0,\bar{t}_{r},\bar{t}_{r},(\bar{t}_{r}-\bar{k}_{1})\bar{t}_{r},\bar{k}_{1},(\lambda \bar{t}_{r}-\bar{k}_{1}\lambda,$ & $\frac{1}{\bar{k}_{1}}$ ~& $-\frac{1}{\bar{k}_{1}}\bar{t}_{r}$ &$\big(0_{5},1,(-\lambda,\pm \lambda),(-\lambda,-\lambda),0,1\big)$ \\
      ~&$\lambda \bar{t}_{r}\pm \bar{k}_{1}\lambda),(-\lambda,-\lambda),0,1\big)$, $\bar{k}_{1}\neq 0$ & $1$ & $-\bar{t}_{r}$ & $\big(0_{5},\bar{k}_{1},(-\lambda \bar{k}_{1},\pm\lambda \bar{k}_{1}),(-\lambda,-\lambda),0,1\big)$\\

  \hline
  \multirow{2}*{6}&$\big(0,\bar{k}_{2},(1-\bar{k}_{2})\bar{t}_{l},\bar{t}_{l},(1-\bar{k}_{2})\bar{t}^{2}_{l},\bar{k}_{2}\bar{t}_{l},(0_{2})_{2},$
   & \multirow{2}*{1} & \multirow{2}*{$-\bar{t}_{l}$} & \multirow{2}*{$\big(0,\bar{k}_{2},0_{4},(0_{2})_{2},0,\bar{k}_{2}\big)$} \\
      ~&$(1-\bar{k}_{2})\bar{t}_{l},\bar{k}_{2}\big)$, $\bar{k}_{2}\neq 0$ & ~& ~ & ~\\

  \hline
  \multirow{2}*{7}&$\big(0,\bar{k}_{2},(1-\bar{k}_{2})\bar{t}_{l},\bar{t}_{l},(1-\bar{k}_{2})\bar{t}^{2}_{l},\bar{k}_{2}\bar{t}_{l},(\lambda \bar{t}_{l},$
   & \multirow{2}*{1} & \multirow{2}*{$-\bar{t}_{l}$} & \multirow{2}*{$\big(0,\bar{k}_{2},0_{4},(0_{2}),(-\lambda,-\lambda),0,\bar{k}_{2}\big)$} \\
      ~&$\lambda \bar{t}_{l}),(-\lambda,-\lambda),(1-\bar{k}_{2})\bar{t}_{l},\bar{k}_{2}\big)$, $\bar{k}_{2}\neq 0,1$ & ~& ~ & ~\\

\hline
    \multirow{2}*{8}&$\big(0,1,0,\bar{t}_{l},0,\bar{t}_{l},(\lambda \bar{t}_{l}+b_{0},\lambda \bar{t}_{l}+b_{0}),$ & $\frac{1}{b_{0}}$ ~& $-\frac{1}{b_{0}}\bar{t}_{l}$ & $\big(0,1,0_{4},(1,1),(-\lambda,-\lambda),0,1\big)$\\
      ~&$(-\lambda,-\lambda),0,1\big)$, $b_{0}\neq 0$ & $1$ & $-\bar{t}_{l}$ & $\big(0,1,0_{4},(b_{0},b_{0}),(-\lambda,-\lambda),0,1\big)$\\

\hline
    \multirow{2}*{9}&$\big(0,1,0,\bar{t}_{l},0,\bar{t}_{l},(\lambda \bar{t}_{l}+b_{0},\lambda \bar{t}_{l}),$ & $\frac{1}{b_{0}}$ ~& $-\frac{1}{b_{0}}\bar{t}_{l}$ & $\big(0,1,0_{4},(1,0),(-\lambda,-\lambda),0,1\big)$\\
      ~&$(-\lambda,-\lambda),0,1\big)$, $b_{0}\neq 0$ & $1$ & $-\bar{t}_{l}$ & $\big(0,1,0_{4},(b_{0},0),(-\lambda,-\lambda),0,1\big)$\\

  \hline
  \multirow{2}*{10}&$\big(\bar{k}_{2},0,\bar{t}_{r},(1-\bar{k}_{2})\bar{t}_{r},
  (1-\bar{k}_{2})\bar{t}^{2}_{r},\bar{k}_{2}\bar{t}_{r},$
   & \multirow{2}*{1} & \multirow{2}*{$-\bar{t}_{r}$} & \multirow{2}*{$\big(\bar{k}_{2},0_{5},(0_{2})_{2},0,\bar{k}_{2}\big)$} \\
      ~&$(0_{2})_{2},(1-\bar{k}_{2})\bar{t}_{r},\bar{k}_{2}\big)$, $\bar{k}_{2}\neq 0$ & ~& ~ & ~\\

  \hline
  \multirow{2}*{11}&$\big(\bar{k}_{2},0,\bar{t}_{r},(1-\bar{k}_{2})\bar{t}_{r},(1-\bar{k}_{2})\bar{t}^{2}_{r},\bar{k}_{2}\bar{t}_{r},(\lambda \bar{t}_{r},$
   & \multirow{2}*{1} & \multirow{2}*{$-\bar{t}_{r}$} & \multirow{2}*{$\big(\bar{k}_{2},0_{5},(0_{2}),(-\lambda,-\lambda),0,\bar{k}_{2}\big)$} \\
      ~&$\lambda \bar{t}_{r}),(-\lambda,-\lambda),(1-\bar{k}_{2})\bar{t}_{r},\bar{k}_{2}\big)$, $\bar{k}_{2}\neq 0,1$ & ~& ~ & ~\\

\hline
    \multirow{2}*{12}&$\big(1,0,\bar{t}_{r},0,0,\bar{t}_{r},(\lambda \bar{t}_{r}+b_{0},\lambda \bar{t}_{r}),$ & $\frac{1}{b_{0}}$ ~& $-\frac{1}{b_{0}}\bar{t}_{r}$ & $\big(1,0_{5},(1,0),(-\lambda,-\lambda),0,1\big)$\\
      ~&$(-\lambda,-\lambda),0,1\big)$, $b_{0}\neq 0$ & $1$ & $-\bar{t}_{r}$ & $\big(1,0_{5},(b_{0},0),(-\lambda,-\lambda),0,1\big)$\\

\hline
    \multirow{2}*{13}&$\big(1,0,\bar{t}_{r},0,0,\bar{t}_{r},(\lambda \bar{t}_{r}+b_{0},\lambda \bar{t}_{r}+b_{0}),$ & $\frac{1}{b_{0}}$ ~& $-\frac{1}{b_{0}}\bar{t}_{r}$ & $\big(1,0_{5},(1,1),(-\lambda,-\lambda),0,1\big)$\\
      ~&$(-\lambda,-\lambda),0,1\big)$, $b_{0}\neq 0$ & $1$ & $-\bar{t}_{r}$ & $\big(1,0_{5},(b_{0},b_{0}),(-\lambda,-\lambda),0,1\big)$\\

  \hline
  \multirow{2}*{14}&\multirow{2}*{$\big(1,1,0,0,\frac{-\bar{k}_{1}^{2}}{4},\bar{k}_{1},(0_{2})_{2},0,1\big)$}
   & \multirow{2}*{1} & \multirow{2}*{$-\frac{\bar{k}_{1}}{2}$} & \multirow{2}*{$\big(1,1,0_{4},(0_{2})_{2},0,1\big)$} \\
      ~&~ & ~& ~ & ~\\


\hline
    \multirow{2}*{15}&\multirow{2}*{$\big(1,1,0,0,\bar{a}_{1}-\frac{\bar{k}_{1}^{2}}{4},\bar{k}_{1},(0_{2})_{2},0,1\big)$, $\bar{a}_{1}\neq 0$} & $\frac{1}{\sqrt{\bar{a}_{1}}}$& $-\frac{\bar{k}_{1}}{2\sqrt{\bar{a}_{1}}}$ & $\big(1,1,0,0,1,0,(0_{2})_{2},0,1\big)$\\
      ~&~ & $1$ & $-\frac{\bar{k}_{1}}{2}$ & $\big(1,1,0,0,\bar{a}_{1},0,(0_{2})_{2},0,1\big)$\\

  \hline
  \multirow{2}*{16}&$\big(1,1,0,0,-b_{0}^{2},2b_{0},(\lambda b_{0},\lambda b_{0}),$
   & \multirow{2}*{1} & \multirow{2}*{$-b_{0}$} & \multirow{2}*{$\big(1,1,0_{4},(0_{2}),(-\lambda,-\lambda),0,1\big)$} \\
      ~&$(-\lambda,-\lambda),0,1\big)$ & ~& ~ & ~\\

\hline
    \multirow{2}*{17}&$\big(1,1,0,0,-b_{0}(b_{0}+\bar{k}_{1}),\bar{k}_{1}+2b_{0},$ & $\frac{1}{\bar{k}_{1}}$ ~& $-\frac{1}{\bar{k}_{1}}b_{0}$ & $\big(1,1,0_{3},1,(0_{2}),(-\lambda,-\lambda),0,1\big)$\\
      ~&$(\lambda b_{0},\lambda b_{0}),(-\lambda,-\lambda),0,1\big)$, $\bar{k}_{1}\neq 0$ & $1$ & $-b_{0}$ & $\big(1,1,0_{3},\bar{k}_{1},(0_{2}),(-\lambda,-\lambda),0,1\big)$\\

\hline
    \multirow{2}*{18}&$\big(-1,-1,\bar{a}_{2},\bar{a}_{2},\frac{3\bar{a}^{2}_{2}}{4}+a^{2}_{0},-\bar{a}_{2},(0_{2})_{2},$ & $\frac{1}{\bar{a}_{0}}$ ~& $-\frac{\bar{a}_{2}}{2\bar{a}_{0}}$ & $\big(-1,-1,0,0,1,0,(0_{2})_{2},0,-1\big)$\\
      ~&$\bar{a}_{2},-1\big)$, $\bar{a}_{0}\neq 0$ & $1$ & $-\frac{\bar{a}_{2}}{2}$ & $\big(-1,-1,0,0,a^{2}_{0},0,(0_{2})_{2},0,-1\big)$\\

  \hline
  \multirow{2}*{19}&$\big(\bar{k}_{2},\bar{k}_{2},\bar{a}_{2},\bar{a}_{2},\frac{(1-2 \bar{k}_{2})\bar{a}^{2}_{2}}{(1-\bar{k}_{2})^{2}},\frac{2 \bar{k}_{2}\bar{a}_{2}}{1-\bar{k}_{2}},(0_{2})_{2},$
   & \multirow{2}*{1} & \multirow{2}*{$-\frac{\bar{a}_{2}}{1-\bar{k}_{2}}$} & \multirow{2}*{$\big(\bar{k}_{2},\bar{k}_{2},0_{4},(0_{2})_{2},0,\bar{k}_{2}\big)$} \\
      ~&$\bar{a}_{2},\bar{k}_{2}\big)$, $\bar{k}_{2}\neq 0,\pm 1$ & ~& ~ & ~\\

  \hline
  \multirow{2}*{20}&$\big(\bar{k}_{2},\bar{k}_{2},\bar{a}_{2},\bar{a}_{2},\frac{(1-2 \bar{k}_{2})\bar{a}^{2}_{2}}{(1-\bar{k}_{2})^{2}},\frac{2 \bar{k}_{2}\bar{a}_{2}}{1-\bar{k}_{2}},(\frac{\bar{a}_{2}\lambda}{1-\bar{k}_{2}},$
   & \multirow{2}*{1} & \multirow{2}*{$-\frac{\bar{a}_{2}}{1-\bar{k}_{2}}$} & \multirow{2}*{$\big(\bar{k}_{2},\bar{k}_{2},0_{4},(0_{2}),(-\lambda,-\lambda),0,\bar{k}_{2}\big)$} \\
      ~&$\frac{\bar{a}_{2}\lambda}{1-\bar{k}_{2}}),(-\lambda,-\lambda),\bar{a}_{2},\bar{k}_{2}\big)$, $\bar{k}_{2}\neq 0,1$ & ~& ~ & ~\\
\bottomrule
\end{tabular}
\\[5pt]
  \caption{Classifying flag datums}\label{table:classifying flag datum}
  \vspace{-1em}
\end{table}

To continue Example~\ref{exam:dendriform algebra}~\ref{exam:dendriform algebra1} and suppose $V={\bf k}\{e_{2}\}$.
Let  $\big(l,r,t_{r},t_{l},a_{1},\bar{k}_{1},(b_{\omega})_{\omega\in S},$ $(\bar{k}_{\omega})_{\omega\in S},a_{2},\bar{k}_{2}\big)$ be a flag datum of $\crr$, and $(g,h)$ the pair of linear maps defined in Definition ~\ref{defn:datum equivalent}. Suppose that
\begin{align}\label{formulas:flag and D}
\begin{aligned}
\begin{split}
        l(e_{1})&=\bar{l};\\
        a_{1}&=\bar{a}_{1}e_{1};\\
        g(e_{2})&=\bar{g}e_{1};
\end{split}
\end{aligned}
\begin{aligned}
\begin{split}
        r(e_{1})&=\bar{r};\\
        b_{e}&=\bar{b}_{e}e_{1};\\
        h(e_{2})&=\bar{h}e_{2};
\end{split}
\end{aligned}
\begin{aligned}
\begin{split}
        t_{l}(e_{1})&=\bar{t}_{l}e_{1};\\
        b_{\sigma}&=\bar{b}_{\sigma}e_{1};\\
        a&=b=e_{1};
\end{split}
\end{aligned}
\begin{aligned}
\begin{split}
        t_{r}(e_{1})&=\bar{t}_{r}e_{1};\\
        a_{2}&=\bar{a}_{2}e_{1};\\
        &
\end{split}
\end{aligned}
\end{align}
where elements $\bar{l},\bar{r},\bar{t}_{r},\bar{t}_{l},
\bar{a}_{1},\bar{k}_{1},(\bar{b}_{e},\bar{b}_{\sigma}),(\bar{k}_{e},\bar{k}_{\sigma}),
\bar{a}_{2},\bar{k}_{2},\bar{g},\bar{h}\in {\bf k}$, and the 10-tuple $(\bar{l},\bar{r},\bar{t}_{r},\bar{t}_{l},\bar{a}_{1},\bar{k}_{1},$ $
(\bar{b}_{e},\bar{b}_{\sigma}),(\bar{k}_{e},\bar{k}_{\sigma}),\bar{a}_{2},\bar{k}_{2})$ is also called a flag datum of $\crr$ for convenience. Sequences of $0$ will be denoted by $0_{n}$ for simplicity, such as sequence $0,0,0,0,0$ will be denoted by $0_{5}$ and $(0,0),(0,0)$ will be denoted by $(0_{2})_{2}$, respectively.

Computing Eqs. (F1)-(F21) by Eq.~(\ref{formulas:flag and D}), we totally get $20$ different cases of all the flag datums, refer to the second column of Table~\ref{table:classifying flag datum}. Computing Eqs.(E1)-(E7) in Definition ~\ref{defn:datum equivalent} by Eq.~(\ref{formulas:flag and D}), we obtain the third and fourth columns of Table~\ref{table:classifying flag datum}, then we classify all the flag datums, the result is in the fifth column of Table~\ref{table:classifying flag datum}.
By Eqs.~(\ref{formulas:unified to extension}) (\ref{formulas:flag and extending structure}) (\ref{formulas:flag and D}) and for each flag datum $(\bar{l},\bar{r},\bar{t}_{r},\bar{t}_{l},
\bar{a}_{1},\bar{k}_{1},(\bar{b}_{e},\bar{b}_{\sigma}),(\bar{k}_{e},\bar{k}_{\sigma}),\bar{a}_{2},$ $\bar{k}_{2})$, suppose that $E={\bf k}\{e_{1},e_{2}\}$, define
\begin{align}\label{formulas:flag to extension}
\begin{aligned}
e_{1}\cdot_{E}e_{1}&=e_{1},\\
e_{2}\cdot_{E}e_{1}&=\bar{t}_{r}e_{1}+\bar{r}e_{2},\\
P_{e,\,E}(e_{1})&=0,\\
P_{\sigma,\,E}(e_{1})&=0,\\
\theta_{E}(e_{1})&=e_{1},\\
\end{aligned}\quad
\begin{aligned}
e_{1}\cdot_{E}e_{2}&=\bar{t}_{l}e_{1}+\bar{l}e_{2},\\
e_{2}\cdot_{E}e_{2}&=\bar{a}_{1}e_{1}+\bar{k}_{1}e_{2},\\
P_{e,\,E}(e_{2})&=\bar{b}_{e} e_{1}+\bar{k}_{e} e_{2},\\
P_{\sigma,\,E}(e_{2})&=\bar{b}_{\sigma}e_{1}+\bar{k}_{\sigma}e_{2},\\
\theta_{E}(e_{2})&=\bar{a}_{2}e_{1}+\bar{k}_{2}e_{2},
\end{aligned}
\end{align}
then $\ee$ is an extension of $R$, and the map $\psi$
 in Diagram ~(\ref{diagram:equivalent cohomologous ee}) is obtained by \[\psi(e_{1})=e_{1},\,\psi(e_{2})=\bar{g}e_{1}+\bar{h}e_{2}.\]
\end{exam}

%

\subsection{Matched pairs of Rota-Baxter family Hom-associative algebras}
In this subsection, we consider the factorization problem for Rota-Baxter family Hom-associative algebras, which is a subproblem of the ES problem.

\begin{defn}
Let $\crr$ and $\cvv$ be two Rota-Baxter family Hom-associative algebras. Suppose that
\begin{align*}
\triangleright:\,R\times V\rightarrow V,\quad\triangleleft:\,V\times R\rightarrow V,\quad \rightharpoonup:\,V\times R\rightarrow R,\quad\leftharpoonup:\,R\times V\rightarrow R
\end{align*}
are bilinear maps. The system $(\crr,\cvv,\triangleright,\triangleleft,\rightharpoonup,\leftharpoonup)$ is called a {\bf matched pair} of Rota-Baxter family Hom-associative algebras if the following conditions hold for all $a,b\in R$ and $x,y\in V$:
\begin{align*}
        &(M1)\,\,\big(V,(P_{\omega,\,V})_{\omega\in S},\theta_{V},\triangleright,\triangleleft\big)\,\,is\,\,a\,\, \crr\,\, bimodule;\\
        &(M2)\,\,\big(R,(P_{\omega})_{\omega\in S},\theta,\rightharpoonup,\leftharpoonup\big)\,\,is\,\,a\,\,\cvv\,\, bimodule;\\
        &(M3)\,\,(ab)\leftharpoonup \theta_{V}(x)=\theta(a)\,(b\leftharpoonup x)+\theta(a)\leftharpoonup(b\triangleright x);\\
        &(M4)\,\,(x\rightharpoonup a)\, \theta(b)+(x\triangleleft a)\rightharpoonup \theta(b)=\theta_{V}(x)\rightharpoonup(ab);\\
        &(M5)\,\,(a\leftharpoonup x)\, \theta(b)+(a\triangleright x)\rightharpoonup \theta(b)=\theta(a)\,(x\rightharpoonup b)+\theta(a)\leftharpoonup(x\triangleleft b);\\
        &(M6)\,\,(x\cdot_{V} y)\triangleleft\theta(a)=\theta_{V}(x)\triangleleft(y\rightharpoonup a)+\theta_{V}(x)\cdot_{V}(y\triangleleft a);\\
        &(M7)\,\,(a\leftharpoonup x)\triangleright\theta_{V}(y)+(a\triangleright x)\cdot_{V} \theta_{V}(y)= \theta(a)\triangleright (x\cdot_{V} y) ;\\
        &(M8)\,\,(x\rightharpoonup a)\triangleright \theta_{V}(y)+(x\triangleleft a)\cdot_{V} \theta_{V}(y)=\theta_{V}(x)\triangleleft(a\leftharpoonup y)+ \theta_{V}(x)\cdot_{V}(a\triangleright y).
    \end{align*}
\end{defn}

\begin{prop}\label{prop:matched pair unified product}
The extending datum $\Omega(R,V)=\big(\triangleright,\triangleleft,\rightharpoonup,\leftharpoonup,\cdot_{V},(P_{\omega,\,V})_{\omega\in S},\theta_{V}\big)$ is a Rota-Baxter family Hom extending structure of $\crr$ through $V$, if and only if, the system $(\crr,\cvv,\triangleright,\triangleleft,\rightharpoonup,\leftharpoonup)$ is a matched pair of Rota-Baxter family Hom-associative algebras. In the case, the associated unified product $\rnv$ is called the {\bf bicrossed product}, and it is denoted by $\crr\bowtie \cvv$.
\end{prop}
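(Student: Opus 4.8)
The plan is to deduce this proposition directly from Theorem~\ref{thm:datum and unified product} by specializing the extending datum so that the cocycle $f$, the family $(Q_{\omega})_{\omega\in S}$ and the map $\eta$ are all trivial. Under this restriction the extending datum collapses to the shorter system $\big(\triangleright,\triangleleft,\rightharpoonup,\leftharpoonup,\cdot_{V},(P_{\omega,\,V})_{\omega\in S},\theta_{V}\big)$ appearing in the statement, and by Theorem~\ref{thm:datum and unified product} the object $\rnv$ is a unified product if and only if conditions (R1)--(R17) hold with $f=0$, $Q_{\omega}=0$ and $\eta=0$ substituted throughout. So the entire task reduces to simplifying (R1)--(R17) under these substitutions and matching the surviving identities against (M1)--(M8) together with the defining axioms of a Rota-Baxter family Hom-associative algebra on $V$.

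First I would carry out the substitution term by term. The conditions (R11), (R15) and (R17) become the trivial identity $0=0$, so they impose nothing. The condition (R12) reduces to the Hom-associativity $(x\cdot_{V}y)\cdot_{V}\theta_{V}(z)=\theta_{V}(x)\cdot_{V}(y\cdot_{V}z)$, and (R16) reduces to the Rota-Baxter family relation for $\big(V,\cdot_{V},(P_{\omega,\,V})_{\omega\in S}\big)$; together with the identity $\theta_{V}\big(P_{\alpha,\,V}(x)\big)=P_{\alpha,\,V}\big(\theta_{V}(x)\big)$, which is Eq.~\eqref{formulas:left module definition3} and is already contained in (R1), these three are exactly the axioms making $\big(V,\cdot_{V},(P_{\omega,\,V})_{\omega\in S},\theta_{V}\big)$ a Rota-Baxter family Hom-associative algebra. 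Thus the specialized (R1)--(R17) encode the requirement that $V$ be such an algebra, which is precisely the standing hypothesis in the definition of a matched pair.

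Next I would sort the remaining conditions into the matched pair axioms. Condition (R1) is literally (M1), asserting that $\big(V,(P_{\omega,\,V})_{\omega\in S},\theta_{V},\triangleright,\triangleleft\big)$ is an $\crr$ bimodule. With $f=\eta=0$, the conditions (R2), (R3), (R4), (R6), (R8) and (R10) become (M3), (M4), (M5), (M6), (M7) and (M8) respectively, essentially by inspection. The key packaging step is to recognize that the five conditions (R5), (R7), (R9), (R13) and (R14), after setting $f=Q_{\omega}=0$, are exactly the module axioms Eqs.~\eqref{formulas:left module definition1}--\eqref{formulas:bimodule definition} read with the roles of the two algebras interchanged, which make $\big(R,(P_{\omega})_{\omega\in S},\theta,\rightharpoonup,\leftharpoonup\big)$ a $\cvv$ bimodule; this is condition (M2). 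Concretely, (R5) and (R14) give the left $V$-action axioms Eqs.~\eqref{formulas:left module definition1}--\eqref{formulas:left module definition2}, the conditions (R7) and (R13) give the right $V$-action axioms Eqs.~\eqref{formulas:right module definition4}--\eqref{formulas:right module definition5}, and (R9) gives the bimodule compatibility Eq.~\eqref{formulas:bimodule definition}.

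The main obstacle is not any single computation but the careful bookkeeping in this last packaging step: one must verify that the specialized forms of (R5), (R7), (R9), (R13), (R14) match the $V$-bimodule axioms on $R$ after the roles of $\crr$ and $\cvv$ are swapped, since (M2) is stated compactly while it unfolds into five separate module identities. Once this correspondence is confirmed, both directions of the equivalence follow at once: given a matched pair one reads off (R1)--(R17) with the trivial maps inserted, and conversely a Rota-Baxter family Hom extending structure of this shape yields all of (M1)--(M8) together with the algebra structure on $V$. Naming the resulting unified product $\crr\bowtie\cvv$ is then a matter of definition.
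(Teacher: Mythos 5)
Your proposal is correct and takes essentially the same approach as the paper: the paper's entire proof is the one-line observation that the result follows directly from Theorem~\ref{thm:datum and unified product} once $f$, $(Q_{\omega})_{\omega\in S}$ and $\eta$ are taken to be trivial. Your explicit bookkeeping — (R11), (R15), (R17) trivializing; (R12), (R16) with Eq.~\eqref{formulas:left module definition3} giving the algebra structure on $V$; (R1) giving (M1); (R5), (R7), (R9), (R13), (R14) assembling into (M2); and (R2), (R3), (R4), (R6), (R8), (R10) becoming (M3)--(M8) — is an accurate expansion of exactly that specialization.
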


\begin{proof}
It's obtained directly by Theorem ~\ref{thm:datum and unified product} if the maps $f,(Q_{\omega})_{\omega\in S}$ and $\eta$ are all trivial.
\end{proof}


Applying the bijection $\Upsilon_{1}$ (or $\Upsilon_{2}$) in Proposition~\ref{prop:bijection of extension and unified product}, we have the following proposition.
\begin{prop}\label{prop:factor through matched pair}
Let $\crr$ and $\cvv$ be two Rota-Baxter family Hom-associative algebra, vector space $E=R+V$, $R\cap V=\{0\}$, and $\rho:E\rightarrow R$ the retraction associated to V. Suppose that $R\subset E $ is an extension of Rota-Baxter family Hom-associative algebras, then the following conditions are equivalent:
\begin{enumerate}
\item\label{prop:factor through matched pair1} The Rota-Baxter family Hom-associative algebra $E$ factorizes through $R$ and $V$.
\item\label{prop:factor through matched pair2} The unified product corresponding to the Rota-Baxter family Hom extending structure $\Omega(R,V)=\Upsilon_{1}(E)$ is a bicrossed product $R\bowtie V$.
\end{enumerate}
\end{prop}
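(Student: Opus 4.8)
The plan is to reduce both conditions to the single statement that $V$ is a Rota-Baxter family Hom subalgebra of $E$, and then read off the equivalence.

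First I would recall the explicit form of the extending structure $\Omega(R,V)=\Upsilon_{1}(E)$ produced from the retraction $\rho$ associated to $V$ in the proof of Theorem~\ref{thm:extension to unified}. Since $\rho$ is the retraction associated to $V$, we have $\rho\,i=\mathrm{Id}_{R}$ and $\ker\rho=V$, and Eq.~\eqref{formulas:extension to unified} gives in particular
$$f(x,y)=\rho(x\cdot_{E}y),\qquad Q_{\omega}(x)=\rho\big(P_{\omega,\,E}(x)\big),\qquad \eta(x)=\rho\big(\theta_{E}(x)\big),$$
together with $x\cdot_{V}y=x\cdot_{E}y-f(x,y)$, $P_{\omega,\,V}(x)=P_{\omega,\,E}(x)-Q_{\omega}(x)$ and $\theta_{V}(x)=\theta_{E}(x)-\eta(x)$.

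Next I would invoke Proposition~\ref{prop:matched pair unified product}: the unified product $\rnv$ is a bicrossed product $\crr\bowtie \cvv$ precisely when the cocycle $f$ and the linear families $(Q_{\omega})_{\omega\in S}$ and $\eta$ are all trivial. Using the displayed formulas and $\ker\rho=V$, the vanishing $f=0$ is equivalent to $x\cdot_{E}y\in V$ for all $x,y\in V$; the vanishing $Q_{\omega}=0$ for all $\omega$ is equivalent to $P_{\omega,\,E}(V)\subseteq V$; and $\eta=0$ is equivalent to $\theta_{E}(V)\subseteq V$. Hence condition~(2) holds if and only if $V$ is closed under $\cdot_{E}$, under each $P_{\omega,\,E}$, and under $\theta_{E}$, that is, if and only if $V$ is a subalgebra of $E$. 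Moreover, in that situation the subtracted terms disappear, so $x\cdot_{V}y=x\cdot_{E}y$, $P_{\omega,\,V}(x)=P_{\omega,\,E}(x)$ and $\theta_{V}(x)=\theta_{E}(x)$, i.e.\ the $V$-factor of the bicrossed product is exactly the subalgebra $V$ with the operations inherited from $E$.

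Finally I would match this with condition~(1). By the definition of factorization, $E$ factorizes through $R$ and $V$ exactly when both $R$ and $V$ are subalgebras of $E$ with $E=R+V$ and $R\cap V=\{0\}$. The hypotheses already provide $E=R+V$, $R\cap V=\{0\}$, and the extension $\crr\subset E$, so $R$ is a subalgebra; thus condition~(1) is equivalent to $V$ being a subalgebra of $E$. Combining the two reductions yields (1)$\Leftrightarrow$(2). The argument is essentially a bookkeeping exercise once the formulas for $f$, $Q_{\omega}$ and $\eta$ are in hand; the only point that warrants a little care is verifying that the induced operations $\cdot_{V}$, $P_{\omega,\,V}$ and $\theta_{V}$ genuinely collapse to the restrictions of those of $E$ when $f$, $Q_{\omega}$ and $\eta$ all vanish, so that the $V$-factor of $\crr\bowtie \cvv$ coincides with the given Rota-Baxter family Hom-associative algebra $V$.
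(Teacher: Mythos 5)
Your proposal is correct and follows essentially the same route as the paper's proof: both rely on the explicit formulas $f(x,y)=\rho(x\cdot_{E}y)$, $Q_{\omega}(x)=\rho\big(P_{\omega,\,E}(x)\big)$, $\eta(x)=\rho\big(\theta_{E}(x)\big)$ coming from $\Upsilon_{1}$ (and their inverses via $\Upsilon_{2}$), together with Proposition~\ref{prop:matched pair unified product}, to identify the triviality of $f$, $(Q_{\omega})_{\omega\in S}$, $\eta$ with $V$ being closed under $\cdot_{E}$, $P_{\omega,\,E}$, $\theta_{E}$. The only cosmetic difference is that you run the argument as a single chain of equivalences using $\ker\rho=V$, whereas the paper checks the two implications separately.
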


\begin{proof}

First, we prove that ~\ref{prop:factor through matched pair1} $\Rightarrow$ ~\ref{prop:factor through matched pair2}. Since $V$ is a subalgebra of $E$, for all $x,y\in V$ and $\omega\in S$ and by Eq. ~(\ref{formulas:extension to unified}), we have
\begin{align*}
f(x,y)&=\rho(x\cdot_{E} y)=\rho(x\cdot_{V} y)=0,\text{ i.e., $f$ is trivial};\\
Q_{\omega}(x)&=\rho\big(P_{\omega,\,E}(x)\big)=\rho\big(P_{\omega,\,V}(x)\big)=0,\text{ i.e., $Q_{\omega}$ is trivial};\\
\eta(x)&=\rho\big(\theta_{E}(x)\big)=\rho\big(\theta_{V}(x)\big)=0,\text{ i.e., $\eta$ is trivial}.
\end{align*}
According to Proposition ~\ref{prop:matched pair unified product}, the corresponding unified product is a bicrossed product $R\bowtie V$.

Second, we prove that ~\ref{prop:factor through matched pair2} $\Rightarrow$ ~\ref{prop:factor through matched pair1}. Here we just need to prove that $V$ is a subalgebra of $E$. In fact, for all $x,y\in V$ and $\omega\in S$, by Eq. ~(\ref{formulas:unified to extension}), we have
\begin{align*}
    x\cdot_{E} y&=x\cdot_{V} y;\\
    P_{\omega,\,E}(x)&=P_{\omega,\,V}(x);\\
    \theta_{E}(x)&=\theta_{V}(x),
    \end{align*}
so $V$ is a subalgebra of $E$.
\end{proof}
\begin{exam} \label{exam:matched pair}
To continue Example~\ref{exam:ES problem}, define $e_{2}\cdot_{V} e_{2}=e_{2}$, $\theta_{V}(e_{2})=e_{2}$, $P_{e,V}(e_{2})=P_{\sigma,V}(e_{2})=-\lambda e_{2}$ on the vector space $V={\bf k}\{e_{2}\}$. Then $(V,\cdot_{V},(P_{e,V},P_{\sigma,V}),\theta_{V})$ is a Rota-Baxter family Hom-associative algebra by Example~\ref{exam:dendriform algebra}~\ref{exam:dendriform algebra2}.
By Table ~\ref{table:classifying flag datum} in Example~\ref{exam:ES problem}, we obtain all the matched pairs of $\crr$ and $V$ as follows: by Proposition ~\ref{prop:matched pair unified product} and Eqs. ~(\ref{formulas:flag and extending structure})-(\ref{formulas:flag and D}), collecting all the flag datums $(\bar{l},\bar{r},\bar{t}_{r},\bar{t}_{l},
\bar{a}_{1},\bar{k}_{1},(\bar{b}_{e},\bar{b}_{\sigma}),(\bar{k}_{e},\bar{k}_{\sigma}),\bar{a}_{2},$ $\bar{k}_{2})$ such that $\bar{a}_{1}=\bar{b}_{e}=\bar{b}_{\sigma}=\bar{a}_{2}=0$, $\bar{k}_{e}=\bar{k}_{\sigma}=-\lambda$ and $\bar{k}_{1}=\bar{k}_{2}=1$, then we obtain Table ~\ref{table:matched pairs} and each row in Table ~\ref{table:matched pairs} defines a matched pair $(\crr,\cvv,\triangleright,\triangleleft,\rightharpoonup,\leftharpoonup)$.
By Eq.~(\ref{formulas:flag to extension}) and Proposition~\ref{prop:factor through matched pair}, for each row, we obtain an extension $\ee$
that factorizes through $\crr$ and $V$.
\begin{table}[htbp]
  \centering
  \renewcommand\arraystretch{1.3}
\begin{tabular}{cccccccccccc}
  \toprule
  $\bar{l}$ & $\bar{r}$ & $\bar{t}_{r}$ & $\bar{t}_{l}$ & $\bar{a}_{1}$ & $\bar{k}_{1}$ & $\bar{b}_{e}$ & $\bar{b}_{\sigma}$ & $\bar{k}_{e}$ & $\bar{k}_{\sigma}$ & $\bar{a}_{2}$ & $\bar{k}_{2}$ \\
\midrule
$0$ & $0$ & $0$ & $0$ & \multirow{5}*{0} & \multirow{5}*{1} & \multirow{5}*{0} & \multirow{5}*{0} &\multirow{5}*{$-\lambda$}&\multirow{5}*{$-\lambda$}&\multirow{5}*{0}&\multirow{5}*{1}\\
$0$ & $0$ & $1$ & $1$ &  &  &  & &&&&\\
$1$ & $0$ & $1$ & $0$ &  &  &  & &&&&\\
$0$ & $1$ & $0$ & $1$ &  &  &  & &&&&\\
$1$ & $1$ & $0$ & $0$ &  &  &  & &&&&\\
\bottomrule
\end{tabular}
\\[5pt]
  \caption{matched pairs of $\crr$ and $V$}\label{table:matched pairs}
\end{table}
\end{exam}
\section{Classifying complements for Rota-Baxter family Hom-associative algebras}
In this section, we define deformation maps on a Rota-Baxter family Hom extending structure and deformations of $V$, then we theoretically solve the CCP problem and give an example.

%
%
%
%

\begin{defn}
Let $\Omega(\crr,\cxx)$ be a Rota-Baxter family Hom extending structure of $\crr$ through $V$. A linear map $d:V\rightarrow R$ is called a {\bf deformation map} of $\Omega(\crr,\cxx)$, if the following conditions hold for all $x,y\in V$:
\allowdisplaybreaks{
\begin{align}
d(x)\,d(y)-d(x\cdot_{V} y)&=d(d(x)\triangleright y+x \triangleleft d(y))-d(x)\leftharpoonup y-x\rightharpoonup d(y)-f(x,y);\label{formulas:deformation in defn1}\\
d(P_{\omega,\,V}(x))&=Q_{\omega}(x)+P_{\omega}(d(x));\label{formulas:deformation in defn2}\\
d(\theta_{V}(x))&=\eta(x)+\theta(d(x)).\label{formulas:deformation in defn3}
\end{align}
}
\end{defn}

We denote by $\mathcal{D}(\cxx,\crr)$ the set of all deformation maps $d$ of the Rota-Baxter family Hom extending structure  $\Omega(\crr,\cxx)$ of $\crr$ through $V$.
\begin{prop}\label{prop:demormation of X}
Let $\Omega(\crr,\cxx)$ be a Rota-Baxter family Hom extending structure of $\crr$ through $V$ and $d:V\rightarrow R$ a deformation map of $\Omega(\crr,\cxx)$. Define a new multiplication on the vector space $V$:
\begin{align}\label{formulas:deformation1}
x\cdot_{d}y:=x\cdot_{V} y+d(x)\triangleright y+x \triangleleft d(y),\text{ for all }x,y\in V.
\end{align}
Then $\cxx_{d}:=\big(V,\cdot_{d},(P_{\omega,\,V})_{\omega\in S},\theta_{V}\big)$ is a Rota-Baxter family Hom-associative algebra, and $\cxx_{d}$ is called the {\bf deformation} of $\cxx$.
\end{prop}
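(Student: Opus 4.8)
The plan is to avoid verifying the three defining axioms of $\cxx_{d}$ by hand and instead to realize $\cxx_{d}$ as an isomorphic copy of a subalgebra of the unified product $\rnv$. Since $\Omega(R,V)$ is a Rota-Baxter family Hom extending structure, $\rnv$ is already a Rota-Baxter family Hom-associative algebra by definition, so it suffices to exhibit a subspace on which the induced structure matches $\big(\cdot_{d},(P_{\omega,\,V})_{\omega\in S},\theta_{V}\big)$.

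First I would consider the graph of the deformation map,
$$\widetilde{V}:=\big\{(d(x),x)\mid x\in V\big\}\subseteq \rnv,$$
and check that it is closed under $\bar{P}_{\omega}$ and $\bar{\theta}$. By Eq.~\eqref{formulas:unified product} one has $\bar{P}_{\omega}(d(x),x)=\big(P_{\omega}(d(x))+Q_{\omega}(x),\,P_{\omega,\,V}(x)\big)$ and $\bar{\theta}(d(x),x)=\big(\theta(d(x))+\eta(x),\,\theta_{V}(x)\big)$; Eqs.~\eqref{formulas:deformation in defn2}-\eqref{formulas:deformation in defn3} say precisely that the first coordinates equal $d\big(P_{\omega,\,V}(x)\big)$ and $d\big(\theta_{V}(x)\big)$, so both results lie in $\widetilde{V}$.

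The main step, and the only place where the defining identity~\eqref{formulas:deformation in defn1} is used, is closure under the product. Using Eq.~\eqref{formulas:unified product}, the second coordinate of $(d(x),x)\,\bar{\cdot}\,(d(y),y)$ is $d(x)\triangleright y+x\triangleleft d(y)+x\cdot_{V}y=x\cdot_{d}y$, while its first coordinate is $d(x)\,d(y)+d(x)\leftharpoonup y+x\rightharpoonup d(y)+f(x,y)$. Rearranging~\eqref{formulas:deformation in defn1} and using linearity of $d$ together with Eq.~\eqref{formulas:deformation1} gives
$$d(x)\,d(y)+d(x)\leftharpoonup y+x\rightharpoonup d(y)+f(x,y)=d\big(x\cdot_{V}y+d(x)\triangleright y+x\triangleleft d(y)\big)=d(x\cdot_{d}y),$$
so $(d(x),x)\,\bar{\cdot}\,(d(y),y)=(d(x\cdot_{d}y),\,x\cdot_{d}y)\in\widetilde{V}$. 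Hence $\widetilde{V}$ is a subalgebra of $\rnv$, and therefore itself a Rota-Baxter family Hom-associative algebra.

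Finally I would transport this structure back to $V$ along the linear bijection $x\mapsto(d(x),x)$, whose inverse is the restriction to $\widetilde{V}$ of the projection $\pi^{\prime}:\rnv\to V$, $(a,x)\mapsto x$. Reading off the second coordinates in the three computations above shows that this bijection carries the product, the family operators, and the twisting map of $\widetilde{V}$ to $\cdot_{d}$, $(P_{\omega,\,V})_{\omega\in S}$ and $\theta_{V}$ respectively; thus $\cxx_{d}\cong\widetilde{V}$ as $4$-tuples, and being isomorphic to a Rota-Baxter family Hom-associative algebra, $\cxx_{d}$ is one. I expect the only genuine computation to be the rearrangement of~\eqref{formulas:deformation in defn1} establishing product-closure; the remaining verifications are immediate bookkeeping with Eq.~\eqref{formulas:unified product}.
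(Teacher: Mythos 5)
Your proposal is correct and follows essentially the same route as the paper: the paper also works with the elements $(d(x),x)$ of the unified product $\rnv$, uses Eqs.~\eqref{formulas:deformation in defn1}--\eqref{formulas:deformation in defn3} to establish exactly your three closure identities $(d(x),x)\,\bar{\cdot}\,(d(y),y)=\big(d(x\cdot_{d}y),x\cdot_{d}y\big)$, $\bar{P}_{\omega}(d(x),x)=\big(d(P_{\omega,\,V}(x)),P_{\omega,\,V}(x)\big)$, $\bar{\theta}(d(x),x)=\big(d(\theta_{V}(x)),\theta_{V}(x)\big)$, and then pulls the axioms of $\rnv$ back to $(V,\cdot_{d})$ through the projection $\pi^{\prime}$. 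The only difference is presentational: you package the pullback as ``the graph is a subalgebra, transport the structure along the bijection $x\mapsto(d(x),x)$,'' whereas the paper verifies the Hom-associativity and Rota--Baxter family identities for $\cdot_{d}$ by explicit $\pi^{\prime}$-computations and invokes condition (R1) for $\theta_{V}\circ P_{\omega,\,V}=P_{\omega,\,V}\circ\theta_{V}$.
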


\begin{proof}%
First, we give some useful identities.
In Diagram ~(\ref{diagram:equivalent cohomologous ue}), define the canonical projection $\pi_{\crr}:\rnv\rightarrow \crr$ and the injection $i_{V}:V\rightarrow \rnv$ as follows:
$$\pi_{\crr}(a,x)=a,\quad i_{V}(x)=(0,x),\,\, a\in \crr,\,x\in V.$$
Then for all $x,y\in V$, we have
\begin{align}
x\cdot_{d}y=&x\cdot_{V}y+d(x)\triangleright y+x \triangleleft d(y)\nonumber\\
           =&\pi^{\prime}((d(x),x)\,\bar{\cdot}\, (d(y),y));\label{zdeformation in proof1}\quad\text{(by Eq.~\eqref{formulas:unified product})}\\
d(x\cdot_{d}y)=&d(x\cdot_{V}y+d(x)\triangleright y+x \triangleleft d(y))\nonumber\\
              =&d(x)\, d(y)+x\rightharpoonup d(y)+d(x)\leftharpoonup y+f(x,y)\quad\text{(by Eq.~\eqref{formulas:deformation in defn1})}\nonumber\\
              =&\pi_{\crr}((d(x),x)\,\bar{\cdot}\, (d(y),y));\label{zdeformation in proof2}\quad\text{(by Eq.~\eqref{formulas:unified product})}\\
(d(x\cdot_{d}y),x\cdot_{d}y)=&(d(x),x)\,\bar{\cdot}\, (d(y),y);\quad\text{(by Eqs.~\eqref{zdeformation in proof1}-\eqref{zdeformation in proof2})}\label{zdeformation in proof3}\\
\big(d(\theta_{V}(z)),\theta_{V}(z)\big)=&\big(\eta(z)+\theta(d(z)),\theta_{V}(z)\big)\quad\text{(by Eq.~\eqref{formulas:deformation in defn3})}\nonumber\\
=&\bar{\theta}\big(d(z),z\big);\quad\text{(by Eq.~\eqref{formulas:unified product})}\label{zdeformation in proof4}\\
(d\big(P_{\beta,\,V}(y)\big),P_{\beta,\,V}(y))=&(Q_{\beta}(y)+P_{\beta}(d(y)),P_{\beta,\,V}(y))
\quad\text{(by Eq.~\eqref{formulas:deformation in defn2})}\nonumber\\
=&\bar{P}_{\beta}(d(y),y).\quad\text{(by Eq.~\eqref{formulas:unified product})}\label{zdeformation in proof5}
\end{align}

Second, we prove that $\cxx_{d}:=\big(V,\cdot_{d},(P_{\omega,\,V})_{\omega\in S},\theta_{V}\big)$ is a Rota-Baxter family Hom-associative algebra.
For all $x,y,z\in V$, we have
\begin{align*}
(x\cdot_{d}y)\cdot_{d}\theta_{V}(z)
=&\pi^{\prime}\Big(\big(d(x\cdot_{d}y),x\cdot_{d}y\big)\,\bar{\cdot}\,\big(d(\theta_{V}(z)),\theta_{V}(z)\big)\Big)\quad\text{(by Eq.~\eqref{zdeformation in proof1}})\\
=&\pi^{\prime}\Big(((d(x),x)\,\bar{\cdot}\, (d(y),y)\big)\,\bar{\cdot}\,\bar{\theta}\big(d(z),z\big)\Big)\quad\text{(by Eqs.~\eqref{zdeformation in proof3}-\eqref{zdeformation in proof4})}\\
=&\pi^{\prime}\Big(\bar{\theta}\big(d(x),x\big)\,\bar{\cdot}\, \big((d(y),y)\,\bar{\cdot}\,(d(z),z)\big)\Big)\\
&\quad\text{(by $\crr \natural V$ being a Rota-Baxter family Hom-associative algebra)}\\
=&\pi^{\prime}\Big((d(\theta_{V}(x)),\theta_{V}(x))\,\bar{\cdot}\, \big(d\big(y\cdot_{d}z\big),y\cdot_{d}z\big)\Big)\quad\text{(by Eqs.~\eqref{zdeformation in proof3}-\eqref{zdeformation in proof4})}\\
=&\theta_{V}(x)\cdot_{d}(y\cdot_{d}z);\quad\text{(by Eq.~\eqref{zdeformation in proof1})}\\
P_{\alpha,\,V}(x)\cdot_{d}P_{\beta,\,V}(y)
=&\pi^{\prime}\Big((d\big(P_{\alpha,\,V}(x)\big),P_{\alpha,\,V}(x))\,\bar{\cdot}\,
(d\big(P_{\beta,\,V}(y)\big),P_{\beta,\,V}(y))\Big)\quad\text{(by Eq.~\eqref{zdeformation in proof1})}\\
=&\pi^{\prime}\Big(\bar{P}_{\alpha}(d(x),x)\,\bar{\cdot}\,\bar{P}_{\beta}(d(y),y)\Big)\quad\text{(by Eq.~\eqref{zdeformation in proof5})}\\
=&\pi^{\prime}\Big(\bar{P}_{\alpha\beta}\Big(\bar{P}_{\alpha}(d(x),x)\,\bar{\cdot}\,
(d(y),y)+(d(x),x)\,\bar{\cdot}\,\bar{P}_{\beta}(d(y),y)
+\lambda\,(d(x),x)\,\bar{\cdot}\,(d(y),y)\Big)\Big)\\
&\quad\text{(by $\crr \natural V$ being a Rota-Baxter family Hom-associative algebra)}\\
=&P_{\alpha\beta,\,V}\Big(\pi^{\prime}\Big(\bar{P}_{\alpha}(d(x),x)\,\bar{\cdot}\,
(d(y),y)+(d(x),x)\,\bar{\cdot}\,\bar{P}_{\beta}(d(y),y)
+\lambda\,(d(x),x)\,\bar{\cdot}\,(d(y),y)\Big)\Big)\\
&\hspace{3cm}\text{(by Eq.~\eqref{formulas:unified product})}\\
=&P_{\alpha\beta,\,V}\Big(\pi^{\prime}\Big((d\big(P_{\alpha,\,V}(x)\big),P_{\alpha,\,V}(x))\,\bar{\cdot}\,
(d(y),y)+(d(x),x)\,\bar{\cdot}\,(d\big(P_{\beta,\,V}(y)\big),P_{\beta,\,V}(y))\\
&+\lambda\,(d(x),x)\,\bar{\cdot}\,(d(y),y)\Big)\Big)\quad\text{(by Eq.~\eqref{zdeformation in proof5})}\\
=&P_{\alpha\beta,\,V}\Big(\pi^{\prime}\Big((d\big(P_{\alpha,\,V}(x)\big),P_{\alpha,\,V}(x))\,\bar{\cdot}\,
(d(y),y)\Big)+\pi^{\prime}\Big((d(x),x)\,\bar{\cdot}\,(d\big(P_{\beta,\,V}(y)\big),P_{\beta,\,V}(y))\Big)
\\
&+\pi^{\prime}\Big(\lambda\,(d(x),x)\,\bar{\cdot}\,(d(y),y)\Big)\Big)\\
=&P_{\alpha\beta,\,V}(P_{\alpha,\,V}(x)\cdot_{d} y+x\cdot_{d}P_{\beta,\,V}(y)+\lambda\,x\cdot_{d} y)\quad\text{(by Eq.~\eqref{zdeformation in proof1})}.
\end{align*}
By Eq. ~\eqref{formulas:left module definition3} in (R1) of Rota-Baxter family Hom extending structure $\Omega(R,V)$, we obtain that $ V_{d}$ is a Rota-Baxter family Hom-associative algebra.
\end{proof}

%

\begin{thm}\label{thm:complement to deformation}
Let $\crr\subset E$ be an extension of Rota-Baxter family Hom-associative algebras with a linear retraction $\rho:\cee\rightarrow \crr$, $V:=\mathrm{ker}\,\rho$ and the Rota-Baxter family Hom extending structure $\Omega(\crr,\cxx)=\Upsilon_{1}(E)$ by {\rm Remark ~\ref{rem:map unified to extension} \ref{rem:map unified to extension2}}. Suppose that $B$ is a Rota-Baxter family Hom complement of $\crr$ in $E$, then there exists a deformation map $d:V\rightarrow R$ of $\Omega(\crr,\cxx)$, such that $B\cong \cxx_{d}$ as Rota-Baxter family Hom-associative algebras.
\end{thm}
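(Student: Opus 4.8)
The plan is to exploit the two vector-space decompositions $E=R\oplus V$ and $E=R\oplus B$, both of which express $E$ as the sum of $R$ with a complement. First I would introduce the projection onto $V$ along $R$, namely $\pi_{V}:E\rightarrow V$, $\pi_{V}(u)=u-\rho(u)$, and restrict it to $B$ to obtain a linear map $\beta:=\pi_{V}|_{B}:B\rightarrow V$. I claim $\beta$ is a linear isomorphism: injectivity follows from $R\cap B=\{0\}$, since $\beta(b)=0$ forces $b=\rho(b)\in R\cap B$; and surjectivity follows by writing any $v\in V$ as $v=a+b$ with $a\in R$, $b\in B$ and checking $\beta(b)=v$ using $\rho(v)=0$ and $\rho|_{R}=\mathrm{Id}$. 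I then set $d:=\rho\circ\beta^{-1}:V\rightarrow R$, so that $\beta^{-1}(v)=d(v)+v$ for every $v\in V$. In particular $B$ is precisely the graph of $d$, that is, $B=\{d(v)+v \mid v\in V\}$.

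The crux of the argument is a component-matching principle. Since $B$ is a subalgebra of $E$, it is closed under $\cdot_{E}$, under each $P_{\omega,\,E}$ and under $\theta_{E}$; and since $B$ is the graph of $d$, any $u\in B$ has its $R$-component equal to $d$ applied to its $V$-component. Therefore, after applying any of the three structure operations to elements of $B$ and decomposing the result along $R\oplus V$ by means of the formulas in Eq.~\eqref{formulas:extension to unified}, uniqueness of the decomposition forces one identity on the $R$-component and one on the $V$-component: the $R$-component identities turn out to be exactly the deformation-map conditions, while the $V$-component identities say that $\beta^{-1}$ is a morphism. A minor but necessary point is that the restrictions of $\cdot_{E}$, $P_{\omega,\,E}$ and $\theta_{E}$ to the subalgebra $R$ coincide with $\cdot$, $P_{\omega}$ and $\theta$, which is what lets the $R$-part computations close up.

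Concretely, I would compute $\beta^{-1}(x)\cdot_{E}\beta^{-1}(y)=(d(x)+x)\cdot_{E}(d(y)+y)$ and expand by bilinearity, sorting the four summands into their $R$- and $V$-parts via Eq.~\eqref{formulas:extension to unified}. The $V$-part is exactly $x\cdot_{V}y+d(x)\triangleright y+x\triangleleft d(y)=x\cdot_{d}y$ of Eq.~\eqref{formulas:deformation1}, so $\beta^{-1}(x)\cdot_{E}\beta^{-1}(y)\in B$ equals $d(x\cdot_{d}y)+x\cdot_{d}y=\beta^{-1}(x\cdot_{d}y)$, and matching $R$-parts (using linearity of $d$) yields Eq.~\eqref{formulas:deformation in defn1}. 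The same recipe applied to $P_{\omega,\,E}(d(x)+x)$ produces the $V$-part $P_{\omega,\,V}(x)$ and the $R$-part $P_{\omega}(d(x))+Q_{\omega}(x)$, giving Eq.~\eqref{formulas:deformation in defn2} together with $\beta^{-1}(P_{\omega,\,V}(x))=P_{\omega,\,E}(\beta^{-1}(x))$; applied to $\theta_{E}(d(x)+x)$ it gives Eq.~\eqref{formulas:deformation in defn3} together with $\beta^{-1}(\theta_{V}(x))=\theta_{E}(\beta^{-1}(x))$.

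Putting these together, $d$ is a deformation map of $\Omega(\crr,\cxx)$, so by Proposition~\ref{prop:demormation of X} the object $\cxx_{d}=\big(V,\cdot_{d},(P_{\omega,\,V})_{\omega\in S},\theta_{V}\big)$ is a Rota-Baxter family Hom-associative algebra, and the bijective linear map $\beta^{-1}:\cxx_{d}\rightarrow B$ respects multiplication, the family $(P_{\omega})_{\omega\in S}$ and $\theta$, hence is an isomorphism $\cxx_{d}\cong B$. The main obstacle is not any single computation but getting the bookkeeping right: once I verify that $\beta$ is a bijection and that $B$ is the graph of $d$, every deformation-map axiom and every morphism axiom falls out of the one principle of matching $R$- and $V$-components, so the deformation map and the isomorphism are produced simultaneously by the same calculation.
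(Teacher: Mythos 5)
Your proposal is correct and takes essentially the same approach as the paper: your map $\beta^{-1}(x)=d(x)+x$ is exactly the paper's isomorphism $\phi:V_{d}\rightarrow B$, and your $d=\rho\circ\beta^{-1}$ coincides with the paper's $d=-\tilde{d}|_{V}$ built from the retraction $\tilde{d}$ associated to $B$, since $B$ being the graph of $d$ is the same statement as $B=\mathrm{ker}\,\tilde{d}$. The only differences are organizational: the paper verifies the deformation identities (by applying $\tilde{d}$ to products of elements of $B$ and getting zero) and the morphism identities of $\phi$ in two separate computations, whereas you extract both simultaneously from one expansion via the component-matching principle, and you prove bijectivity explicitly where the paper defers to \cite{ZW24}.
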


\begin{proof}
Let $\tilde{d}:E\rightarrow \crr$ be the linear retraction associated to $B$,
and $d:=-\tilde{d}|_{V}$.

First, we prove that $d$ is a deformation map.
In fact, $\tilde{d}(x-\tilde{d}(x))=\tilde{d}(x)-\tilde{d}(x)=0$, for all $x\in V$, then we have $x-\tilde{d}(x)=x+d(x)\in \mathrm{ker}\,\tilde{d}=B$ by {\rm Remark ~\ref{rem:map unified to extension} \ref{rem:map unified to extension1}}. Moreover, $B$ is a Rota-Baxter family Hom complement of $\crr$ in $E$, then for all $x,y\in V$, we have
\begin{align*}
0=&\tilde{d}\Big(\big(x+d(x)\big)\cdot_{E}\big(y+d(y)\big)\Big)\\
=&\tilde{d}\big(d(x)\,d(y)+x\rightharpoonup d(y)+d(x)\leftharpoonup y+f(x,y)+x\triangleleft d(y)+d(x)\triangleright y+x\cdot_{V} y\big)\quad\text{(by Eq.~\eqref{formulas:unified to extension})}\\
=&d(x)\,d(y)+x\rightharpoonup d(y)+d(x)\leftharpoonup y+f(x,y)-d(x\triangleleft d(y)+d(x)\triangleright y)-d(x\cdot_{V} y);\\
0=&\tilde{d}(P_{\omega,\,E}(x+d(x)))\\
=&\tilde{d}(P_{\omega}(d(x))+Q_{\omega}(x)+P_{\omega,\,V}(x))\quad\text{(by Eq.~\eqref{formulas:unified to extension})}\\
=&P_{\omega}(d(x))+Q_{\omega}(x)-d(P_{\omega,\,V}(x));\\
0=&\tilde{d}(\theta_{E}(x+d(x)))\\
=&\tilde{d}(\theta(d(x))+\eta(x)+\theta_{V}(x))\quad\text{(by Eq.~\eqref{formulas:unified to extension})}\\
=&\theta(d(x))+\eta(x)-d(\theta_{V}(x)).
\end{align*}
We have proved Eqs.~\eqref{formulas:deformation in defn1}-\eqref{formulas:deformation in defn3}.

Second, define a linear map
\begin{align*}
\phi:V_{d}&\rightarrow B\\
x&\mapsto x+d(x),\text{ $x\in V$.}
\end{align*}
Similar to ~\cite[Theorem 5.3]{ZW24},
here we only prove that the linear map $\phi$ is a morphism of Rota-Baxter family Hom-associative algebras. In fact, for all $x,y\in V$ and $\omega\in S$, we have
\begin{align*}
\phi(x\cdot_{d}y)=&x\cdot_{d}y+d(x\cdot_{d}y)\\
=&x\triangleleft d(y)+d(x)\triangleright y+x\cdot_{V} y+d(x\triangleleft d(y)+d(x)\triangleright y+x\cdot_{V} y)\quad\text{(by Eq.~\eqref{formulas:deformation1})}\\
=&x\triangleleft d(y)+d(x)\triangleright y+x\cdot_{V} y+d(x\triangleleft d(y)+d(x)\triangleright y)+d(x\cdot_{V} y)\\
=&x\triangleleft d(y)+d(x)\triangleright y+x\cdot_{V} y+d(x)\,d(y)+x\rightharpoonup d(y)+d(x)\leftharpoonup y+f(x,y)\\
&\hspace{3cm}\text{(by Eq.~\eqref{formulas:deformation in defn1})}\\
=&\big(x+d(x)\big)\cdot_{E}\big(y+d(y)\big)\quad\text{(by Eq.~\eqref{formulas:unified to extension})}\\
=&\phi(x)\cdot_{E}\phi(y).\\
\phi(P_{\omega,\,V}(x))
=&P_{\omega,\,V}(x)+d(P_{\omega,\,V}(x))\\
=&P_{\omega,\,V}(x)+Q_{\omega}(x)+P_{\omega}(d(x))\quad\text{(by Eq.~\eqref{formulas:deformation in defn2})}\\
=&P_{\omega,\,E}(x)+P_{\omega,\,E}(d(x))\quad\text{(by Eq.~\eqref{formulas:unified to extension})}\\
=&P_{\omega,\,E}(x+d(x))\\
=&P_{\omega,\,E}(\phi(x));\\
%
\phi(\theta_{V}(x))
=&\theta_{V}(x)+d(\theta_{V}(x))\\
=&\theta_{V}(x)+\eta(x)+\theta(d(x))\quad\text{(by Eq.~\eqref{formulas:deformation in defn3})}\\
=&\theta_{E}(x)+\theta_{E}(d(x))\quad\text{(by Eq.~\eqref{formulas:unified to extension})}\\
=&\theta_{E}(x+d(x))\\
=&\theta_{E}(\phi(x)).
\end{align*}

\end{proof}

\begin{rem}\label{rem:complement to deformation}
Under the condition of Theorem ~\ref{thm:complement to deformation}, we denote by $\mathcal{C}(\crr,E)$ the set of all Rota-Baxter family Hom complements of $\crr$ in $E$, then we establishes a map:
\begin{align*}
\Delta_{1}:\mathcal{C}(\crr,E)&\rightarrow \mathcal{D}(\crr,V)\\
B&\mapsto d:=-\tilde{d}|_{V},
\end{align*}
where $\tilde{d}$ is the retraction associated to $B$.
\end{rem}

\begin{prop}\label{prop:bijection complement to deformation}
The map $\Delta_{1}:\mathcal{C}(\crr,E)\rightarrow \mathcal{D}(\crr,V)$ defined in {\rm Remark ~\ref{rem:complement to deformation}} is a bijection.
\end{prop}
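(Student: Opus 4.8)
The plan is to produce an explicit inverse $\Delta_2\colon\mathcal{D}(\crr,V)\to\mathcal{C}(\crr,E)$ and check that $\Delta_1\circ\Delta_2$ and $\Delta_2\circ\Delta_1$ are both identities. Given a deformation map $d\in\mathcal{D}(\crr,V)$, Proposition~\ref{prop:demormation of X} produces the Rota-Baxter family Hom-associative algebra $\cxx_d$, and the computation in the proof of Theorem~\ref{thm:complement to deformation} shows---using only Eqs.~\eqref{formulas:deformation in defn1}-\eqref{formulas:deformation in defn3} together with Eq.~\eqref{formulas:unified to extension}, and never the hypothesis that $d$ came from a complement---that the map $\phi\colon\cxx_d\to E$, $x\mapsto x+d(x)$, is a morphism of Rota-Baxter family Hom-associative algebras. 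It is injective because $E=R\oplus V$. I would therefore define $\Delta_2(d):=B_d:=\phi(\cxx_d)=\{x+d(x)\mid x\in V\}$, a subalgebra of $E$ isomorphic to $\cxx_d$.

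The first real step is to verify that $\Delta_2$ is well defined, i.e.\ that $B_d$ is genuinely a Rota-Baxter family Hom complement of $\crr$ in $E$. For $E=R+B_d$, any $u=a+x$ with $a\in R$, $x\in V$ decomposes as $u=(a-d(x))+(x+d(x))$ with $a-d(x)\in R$ and $x+d(x)\in B_d$. For $R\cap B_d=\{0\}$, if $x+d(x)\in R$ then comparing $V$-components in $E=R\oplus V$ forces $x=0$, hence $x+d(x)=0$. Thus $B_d\in\mathcal{C}(\crr,E)$.

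Next I would check $\Delta_1\circ\Delta_2=\mathrm{id}_{\mathcal{D}(\crr,V)}$. From $E=R\oplus B_d$, the retraction $\tilde d$ associated to $B_d$ sends $a+x=(a-d(x))+(x+d(x))$ to its $R$-component $a-d(x)$, so $\tilde d(x)=-d(x)$ for $x\in V$ and hence $\Delta_1(B_d)=-\tilde d|_V=d$. For $\Delta_2\circ\Delta_1=\mathrm{id}_{\mathcal{C}(\crr,E)}$, start from a complement $B$ with associated retraction $\tilde d$ and set $d=-\tilde d|_V$. The inclusion $B_d\subseteq B$ is precisely the relation $x+d(x)=x-\tilde d(x)\in\ker\tilde d=B$ already observed in the proof of Theorem~\ref{thm:complement to deformation}. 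Conversely, for $b\in B$ write $b=a+x$; since $\tilde d(b)=0$ and $\tilde d(a+x)=a-d(x)$, we get $a=d(x)$, whence $b=x+d(x)\in B_d$. Therefore $B=B_d$, and $\Delta_1$ is a bijection with inverse $\Delta_2$.

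The bulk of this is routine once the morphism property of $\phi$ is available, so the step deserving the most care---the main obstacle---is confirming that the verification of $\phi$ in Theorem~\ref{thm:complement to deformation} depends only on the deformation-map axioms and Eq.~\eqref{formulas:unified to extension}, so that $\phi$ (and thus $B_d$) makes sense for an arbitrary $d\in\mathcal{D}(\crr,V)$ rather than only for those $d$ arising from a prescribed complement. Granting this, well-definedness of $\Delta_2$ and the two composition identities follow cleanly from the direct-sum decomposition $E=R\oplus V$ and the idempotency $\tilde d|_R=\mathrm{Id}_R$ of the retractions of Remark~\ref{rem:map unified to extension}.
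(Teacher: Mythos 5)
Your proposal is correct and takes essentially the same route as the paper: both invert $\Delta_{1}$ via the subspace $B_{d}=\{x+d(x)\mid x\in V\}$, which the paper realizes as $\mathrm{ker}\,\xi$ for the linear retraction $\xi(a+x)=a-d(x)$ and you realize as the image of $\phi$. The differences are only presentational: the paper re-verifies closure of $\cdot_{E}$, $P_{\omega,\,E}$ and $\theta_{E}$ on $B_{d}$ by direct computation and defers the identities $\Delta_{1}\circ\Delta_{2}=\mathrm{id}$ and $\Delta_{2}\circ\Delta_{1}=\mathrm{id}$ to~\cite[Proposition 5.5]{ZW24}, whereas you obtain closure by observing that the morphism computation for $\phi$ in Theorem~\ref{thm:complement to deformation} uses only Eqs.~\eqref{formulas:deformation in defn1}-\eqref{formulas:deformation in defn3} and~\eqref{formulas:unified to extension}, and you write the two composition identities out explicitly.
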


\begin{proof}

Defining a map:
\begin{align*}
\Delta_{2}:\mathcal{D}(\crr,V)&\rightarrow \mathcal{C}(\crr,E)\\
d&\mapsto B:=\mathrm{ker}\, \xi,
\end{align*}
where the linear map $\xi:E\rightarrow \crr$ is defined by $\xi(a+x)=a-d(x)$,\,$a\in \crr$, $x\in V$. We need to prove that the map $\Delta_{2}$ is well defined, i.e., $B$ is a  Rota-Baxter family Hom complement of $\crr$ in $E$. Suppose that the inclusion map $i:\crr\rightarrow E$ is defined by $i(a)=a,\,a\in \crr$, then $\xi(i(a))=\xi(a)=a$, i.e., the map $\xi$ is a linear retraction. By Remark ~\ref{rem:map unified to extension} \ref{rem:map unified to extension1}, $B$ is a space complement of $\crr$ in $E$. We only need to prove that the operations $\cdot_{E},P_{\omega,\,E}$ and $\theta_{E}$ is closed on $B$. In fact,
\begin{align*}
B=&\mathrm{ker}\, \xi\\
 =&\{a+x|\xi(a+x)=a-d(x)=0,\,a\in \crr, x\in V\}\\
  =&\{a+x|a=d(x),\,a\in \crr, x\in V\}\\
=&\{d(x)+x|x\in V\}.
\end{align*}
Also we have
\begin{align*}
\big(x+d(x)\big)\cdot_{E}\big(y+d(y)\big)
=&d(x)\, d(y)+x\rightharpoonup d(y)+d(x)\leftharpoonup y+f(x,y)+x\cdot_{V}y+d(x)\triangleright y+x \triangleleft d(y)\\
&\hspace{3cm}\text{(by Eq.~\eqref{formulas:unified to extension})}\\
=&d(d(x)\triangleright y+x \triangleleft d(y))+d(x\cdot_{V}y)+x\cdot_{V}y+d(x)\triangleright y+x \triangleleft d(y)\\
&\hspace{3cm}\text{(by Eq.~\eqref{formulas:deformation in defn1})}\\
=&d(x\cdot_{V}y+d(x)\triangleright y+x \triangleleft d(y))+x\cdot_{V}y+d(x)\triangleright y+x \triangleleft d(y)\in B;\\
%
P_{\omega,\,E}(x+d(x))=&P_{\omega,\,V}(x)+Q_{\omega}(x)+P_{\omega}(d(x))\quad\text{(by Eq.~\eqref{formulas:unified to extension})}\\
=&P_{\omega,\,V}(x)+d(P_{\omega,\,V}(x))\quad\text{(by Eq.~\eqref{formulas:deformation in defn2})}\\
=&P_{\omega,\,V}(x)+d(P_{\omega,\,V}(x))\in B;\\
%
\theta_{E}(x+d(x))=&\theta_{V}(x)+\eta(x)+\theta(d(x))\quad\text{(by Eq.~\eqref{formulas:unified to extension})}\\
=&\theta_{V}(x)+d(\theta_{V}(x))\quad\text{(by Eq.~\eqref{formulas:deformation in defn3})}\\
=&\theta_{V}(x)+d(\theta_{V}(x))\in B.
\end{align*}
Similar to ~\cite[Proposition 5.5]{ZW24}, we can prove that 
$\Delta_{1}\circ\Delta_{2}=\mathrm{id}_{\mathcal{D}(\crr,V)}$ and $\Delta_{2}\circ\Delta_{1}=\mathrm{id}_{\mathcal{C}(\crr,E)}$.
%
%
%
\end{proof}

\begin{defn}\label{definition:equivalent deformation}
Let $\Omega(\crr,\cxx)$ be a Rota-Baxter family Hom extending structure of $\crr$ through $V$. Two deformation maps $d,D:V\rightarrow R$ are called {\bf equivalent}, and we denote it by $d\sim D$, if there exists a linear automorphism $\delta:V\rightarrow V$ satisfying the following conditions for all $x,y\in V$ and $\omega\in S$,
\begin{align}
\delta(x\cdot_{V} y)-\delta(x)\cdot_{V}\delta(y)&=D(\delta(x))\triangleright \delta(y)+\delta(x)\triangleleft D(\delta(y))-\delta(d(x)\triangleright y)-\delta(x \triangleleft d(y));\label{formula:equivalent deformation1}\\
\delta(P_{\omega,\,V}(x))&=P_{\omega,\,V}(\delta(x));\label{formula:equivalent deformation2}\\
\delta(\theta_{V}(x))&=\theta_{V}(\delta(x)).\label{formula:equivalent deformation3}
\end{align}
\end{defn}
By Definition ~\ref{definition:equivalent deformation}, we know that $d\sim D$ if and only if $\cxx_{d}\cong \cxx_{D}$ as Rota-Baxter family Hom-associative algebras. It's obvious that the relation $\sim$ is an equivalence relation. Suppose that
$$CH^{2}(\crr,E):=\mathcal{C}(\crr,E)/\cong,\quad H^{2}(\crr,V):=\mathcal{D}(\crr,V)/\sim.$$
Now we arrive at our main result of classifying complements for Rota-Baxter family Hom-associative algebras.
\begin{thm}
Let $\crr\subset E$ be an extension of Rota-Baxter family Hom-associative algebras with a linear retraction $\rho:\cee\rightarrow \crr$, $V=\mathrm{ker}\,\rho$ and the Rota-Baxter family Hom extending structure $\Omega(\crr,V)=\Upsilon_{1}(E)$ by {\rm Remark~\ref{rem:map unified to extension}~\ref{rem:map unified to extension2}}.
Then the map $\Delta_{1}$ defined in {\rm Remark ~\ref{rem:complement to deformation}} induces a bijection of equivalence classes via $\sim$:
%
\begin{align*}
\Delta:CH^{2}(\crr,E)&\rightarrow H^{2}(\crr,V)\\
     [B]\,\,\,\,\,\,\,\,\,\,\,\,&\mapsto \,\,\,\,\,\,\,\,\,\,\,\,[d]
\end{align*}
In particular, the index of $\crr$ in $E$ is computed by the formula $[E:\crr]=|H^{2}(\crr,V)|$.
\end{thm}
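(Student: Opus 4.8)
The plan is to combine the bijection $\Delta_{1}$ of Proposition~\ref{prop:bijection complement to deformation} with the purely set-theoretic Proposition~\ref{prop:induce map}. Since $\Delta_{1}:\mathcal{C}(\crr,E)\to\mathcal{D}(\crr,V)$ is already known to be a bijection, the only thing left to verify is that it transports the equivalence relation $\cong$ on $\mathcal{C}(\crr,E)$ to the relation $\sim$ on $\mathcal{D}(\crr,V)$; that is, for complements $B,B'$ with $d:=\Delta_{1}(B)$ and $D:=\Delta_{1}(B')$, one has $B\cong B'$ if and only if $d\sim D$. Granting this compatibility, Proposition~\ref{prop:induce map}~\ref{prop set item 2} furnishes the induced bijection $\Delta:CH^{2}(\crr,E)\to H^{2}(\crr,V)$, $[B]\mapsto[d]$.

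First I would isolate the two facts that make the compatibility immediate. By Theorem~\ref{thm:complement to deformation}, the deformation map $d=\Delta_{1}(B)$ is exactly the one for which $B\cong\cxx_{d}$ as Rota-Baxter family Hom-associative algebras (this $d$ is $-\tilde{d}|_{V}$ for the retraction $\tilde{d}$ associated to $B$, which is precisely how $\Delta_{1}$ is defined), and similarly $B'\cong\cxx_{D}$. On the other hand, the remark following Definition~\ref{definition:equivalent deformation} records that $d\sim D$ holds if and only if $\cxx_{d}\cong\cxx_{D}$, the automorphism $\delta$ of Definition~\ref{definition:equivalent deformation} encoding such an isomorphism.

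The compatibility then follows from a short chain of equivalences using symmetry and transitivity of $\cong$: one has $B\cong B'$ if and only if $\cxx_{d}\cong\cxx_{D}$ (because $B\cong\cxx_{d}$ and $B'\cong\cxx_{D}$), and the latter holds if and only if $d\sim D$. Applying Proposition~\ref{prop:induce map} with $A=\mathcal{C}(\crr,E)$, $B=\mathcal{D}(\crr,V)$, $\xi=\Delta_{1}$ and the equivalence relation $\cong$, the relation it induces on $\mathcal{D}(\crr,V)$ is exactly $\sim$, so $\Delta_{1}$ descends to the asserted bijection $\Delta$, which is thereby well defined, injective and surjective.

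Finally, for the index formula I would simply unwind definitions: $[E:\crr]$ is the cardinal of the set of isomorphism classes of Rota-Baxter family Hom complements of $\crr$ in $E$, i.e. $[E:\crr]=|CH^{2}(\crr,E)|$, and since $\Delta$ is a bijection this equals $|H^{2}(\crr,V)|$. I do not anticipate a real obstacle here, as the substance was already carried out in Theorem~\ref{thm:complement to deformation} and Proposition~\ref{prop:bijection complement to deformation}; the one point deserving care is confirming that the $d$ output by $\Delta_{1}$ agrees with the deformation map of Theorem~\ref{thm:complement to deformation}, which holds by construction since both equal $-\tilde{d}|_{V}$.
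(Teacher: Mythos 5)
Your proposal is correct and follows the same route as the paper: the paper's entire proof is the one-line observation that the theorem is a direct consequence of Proposition~\ref{prop:induce map} applied to the bijection $\Delta_{1}$ of Proposition~\ref{prop:bijection complement to deformation}, together with the remark after Definition~\ref{definition:equivalent deformation} that $d\sim D$ if and only if $\cxx_{d}\cong\cxx_{D}$. You merely make explicit the compatibility check ($B\cong B'$ iff $d\sim D$, via Theorem~\ref{thm:complement to deformation}) that the paper leaves implicit, which is a welcome but not substantively different elaboration.
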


\begin{proof}
It's a direct result of Proposition ~\ref{prop:induce map}.
\end{proof}
\begin{exam}
To continue Example ~\ref{exam:ES problem}, suppose that $d(e_{2})=\bar{d}e_{1}\,(\bar{d} \in {\bf k})$, $\delta(e_{2})=\bar{\delta}e_{2}\,(\bar{\delta} \neq 0\in {\bf k})$ and $x=y=e_{2}$. Selecting case 10 in Table ~\ref{table:classifying flag datum}, for each flag datum of $\crr$ (i.e., for different $\bar{k}_{2}$ and $\bar{t}_{r}$), we obtain an extension $\ee$ of $\crr$ by Eq. ~\eqref{formulas:flag to extension}.
\begin{enumerate}
\item When $\bar{k}_{2}\neq 1$ and given $\bar{t}_{r}$, computing Eqs. ~\eqref{formulas:deformation in defn1}-\eqref{formulas:deformation in defn3} by Eqs.~\eqref{formulas:flag and extending structure}-\eqref{formulas:flag and D}, we obtain the deformation map $\bar{d}=-\bar{t}_{r}$.
By Proposition ~\ref{prop:bijection complement to deformation}, we obtain a Rota-Baxter family Hom complement $B$ of $\crr$ in $\cee$ as follows:
 $B=\mathrm{ker}\, \xi=\{d(x)+x|x\in V\}={\bf k}\{e_{2}-\bar{t}_{r}e_{1}\}$.
 In particular, $[E:R]=1.$
\item When $\bar{k}_{2}=1$ and given $\bar{t}_{r}$, we obtain infinity many deformation maps $\bar{d}\in {\bf k}$. For each deformation map $\bar{d} \neq -\bar{t}_{r}$, computing Eqs.~\eqref{formula:equivalent deformation1}-\eqref{formula:equivalent deformation3} by Eqs.~\eqref{formulas:flag and extending structure}-\eqref{formulas:flag and D}, we obtain $\bar{\delta}=\bar{d}+\bar{t}_{r}$, then we know that it is equivalent to the deformation map $1-\bar{t}_{r}$. In particular $[E: R]=2.$
\end{enumerate}
\end{exam}

\noindent
{\bf Declaration of interests.}
The authors have no conflicts of interest to disclose.

\smallskip
\noindent
{{\bf Acknowledgments.}
Y. Y. Zhang is supported by National Natural Science Foundation of China (12101183) and also supported by the Postdoctoral Fellowship Program of CPSF under
Grant Number (GZC20240406). }

\smallskip

\noindent
{\bf Data availability.}
Data will be made available on request.
\medskip

\end{document}